\definecolor{mygray}{gray}{0.85}
\newcommand{\pureindep}[1][]{%
	\mathrel{
		\mathop{
			\vcenter{
				\hbox{\oalign{\noalign{\kern-.3ex}\hfil$\vert$\hfil\cr
						\noalign{\kern-.7ex}
						$\smile$\cr\noalign{\kern-.3ex}}}
			}
		}\displaylimits_{#1}
	}
}
\newcommand{\indep}[2]{%
	\mathrel{
		\mathop{
			\vcenter{
				\hbox{%
					\oalign{
						\noalign{\kern-.3ex}\hfil$\vert$\hfil\cr
						\noalign{\kern-.7ex}
						$\smile$\cr\noalign{\kern-.3ex}
					}
				}
			}
		}^{\!\!\!\!\!#2}_{\!\!\hspace{-0.1em}#1}
	}
}
\newcommand{\displayindep}[2]{%
	\mathrel{
		\mathop{
			\vcenter{
				\hbox{%
					\oalign{
						\noalign{\kern-.3ex}\hfil$\vert$\hfil\cr
						\noalign{\kern-.7ex}
						$\smile$\cr\noalign{\kern-.3ex}
					}
				}
			}
		}^{\!\!\hspace{-0.1em}#2}_{\!\!\hspace{-0.1em}#1}
	}
}
\newcommand{\displayfindep}[2]{%
	\mathrel{
		\mathop{
			\vcenter{
				\hbox{%
					\oalign{
						\noalign{\kern-.3ex}\hfil$\vert$\hfil\cr
						\noalign{\kern-.7ex}
						$\smile$\cr\noalign{\kern-.3ex} 
					}
				}
			}
		}^{\!\hspace{-0.14em}#2}_{\!\!\hspace{-0.05em}#1}
	}
}
\newcommand{\mrm}[1]{\mathrm{#1}}
\renewcommand{\leq}{\leqslant}
\renewcommand{\geq}{\geqslant}
\def\Ind{\setbox0=\hbox{$x$}\kern\wd0\hbox to 0pt{\hss$\mid$\hss}
	\lower.9\ht0\hbox to 0pt{\hss$\smile$\hss}\kern\wd0}
\def\Notind{\setbox0=\hbox{$x$}\kern\wd0\hbox to 0pt{\mathchardef
		\nn=12854\hss$\nn$\kern1.4\wd0\hss}\hbox to
	0pt{\hss$\mid$\hss}\lower.9\ht0 \hbox to 0pt{\hss$\smile$\hss}\kern\wd0}
\def\subsection{\@startsection{subsection}{3}%
  \z@{.5\linespacing\@plus.7\linespacing}{.3\linespacing}%
  {\bfseries\centering}}
\def\subsubsection{\@startsection{subsubsection}{3}%
  \z@{.5\linespacing\@plus.7\linespacing}{.3\linespacing}%
  {\centering}}
\def\myfnt{\ifx\protect\@typeset@protect\expandafter\footnote\else\expandafter\@gobble\fi}
\renewcommand{\restriction}{ {\upharpoonright} }
\newtheorem{theorem}{Theorem}[section]
\newtheorem*{theorem*}{Main Theorem}
\theoremstyle{definition}
\newtheorem{definition}[theorem]{Definition}
\newtheorem{lemma}[theorem]{Lemma}
\newtheorem{proposition}[theorem]{Proposition}
\newtheorem{example}[theorem]{Example}
\newtheorem{fact}[theorem]{Fact}
\newtheorem{remark}[theorem]{Remark}
\newtheorem{notation}[theorem]{Notation}
\newtheorem{construction}[theorem]{Construction}
\newcounter{claimcounter}
\numberwithin{claimcounter}{theorem}
\newenvironment{claim}{\stepcounter{claimcounter}{\noindent {\underline{\em Claim \theclaimcounter}.}}}{}
\begin{document}

\begin{abstract} We prove that, for every Coxeter diagram $D$ with no rank $3$ residues of spherical type and such that $D$ has not only edges labelled by~$2$, the space of countable (Tits) buildings of type $D$ is Borel complete, that is, classifying countable buildings of type $D$ up to isomorphism is as hard as classifying countable graphs up to isomorphism. In particular, for every  $n\geq 3$, the space of countable generalised $n$-gons is Borel complete. This generalises the proof of Borel completeness of countable projective \mbox{planes from~\cite{pao}.}
\end{abstract}

\title[Borel completeness of Tits buildings]{Borel completeness of Tits buildings with no rank 3 residues of spherical type}

\thanks{The authors were supported by project PRIN 2022 ``Models, sets and classifications", prot. 2022TECZJA. The first author was also supported by INdAM Project 2024 (Consolidator grant) ``Groups, Crystals and Classifications''. The second author was also supported by an INdAM post-doc grant.}

\author{Gianluca Paolini}
\author{Davide Emilio Quadrellaro}
\address{Department of Mathematics “Giuseppe Peano”,
	University of Torino,
	Via Carlo Alberto 10,
	10123 Torino, Italy.}
\email{gianluca.paolini@unito.it}
\email{davideemilio.quadrellaro@unito.it}

\address{Istituto Nazionale di Alta Matematica ``Francesco Severi'',
	Piazzale Aldo Moro 5
	00185 Roma, Italy.}
\email{quadrellaro@altamatematica.it}

\subjclass[2020]{03E15, 20E42, 51E12, 51E24}

\date{\today}
\maketitle

\setcounter{tocdepth}{1}

\section{Introduction}
(Tits) buildings are combinatorial objects invented by J. Tits in the 1950s. Buildings were initially introduced by Tits as a means to understand the structure of semisimple algebraic groups over arbitrary fields, but their study has led to a rich and influential theory, as witnessed by the various books written on this topic (see e.g.~\cite{abra,weiss,tits_book}). As stated in the introduction to \cite[Chapter~9]{abra}, one of Tits's greatest achievements was the classification of thick, irreducible, spherical buildings of rank at least $3$, proved in \cite{tits_book}. Roughly speaking, Tits's result is that such buildings correspond to simple algebraic groups (of relative rank at least $3$) defined over an arbitrary field. On the other hand, as observed by Tits in \cite[\S1.6]{tits_local}, such classification results seem to be impossible in the general case. This claim was substantiated by Ronan in~\cite{ronan}, where, under his own words, he provided a ``free construction'' of certain buildings (those whose rank $3$ residues are all of non-spherical type), which allows one to impose any desired generalised $n$-gons as rank $2$ residues, provided their parameters fit together. Ronan uses his construction to argue that, for example, affine buildings of rank $3$ cannot be classified (cf. \cite[p.~243]{ronan}). The aim of this paper is to provide one possible interpretation of some of these anti-classification results in the framework of {\em invariant descriptive set theory} and {\em Borel complexity}. 

In recent years, this part of set theory has imposed itself as a fundamental tool to compare classification problems and prove anti-classification results in the most disparate areas of mathematics. This is done by coding the objects under classification as points of a Polish space, so that the usual notions of complexity arising from analysis can be applied (e.g., notions such as Borel, analytic, co-analytic, etc.). In the context of invariant descriptive set theory of countable structures (which is our focus here), this makes it possible to compare the complexity of the isomorphism relations for different classes of countable structures. An example of this kind of results is the recent proof of the Borel completeness of torsion-free abelian groups in \cite{PS}, due to the first named author of this paper and S. Shelah. In this context, ``Borel complete'' just means ``as complex as graph isomorphism'', since the relation of isomorphism on countable graphs is provably the most complex (in terms of Borel reducibility) isomorphism relation between countable structures. 

\medskip

Our main theorem is the following anti-classification result on buildings.

\begin{theorem*} For every Coxeter diagram $D$ with no rank $3$ residues of spherical type, and such that $D$ has at least one edge labelled by $n \in [3,\infty]$, the space of countable buildings of type $D$ is Borel complete. In particular, for every $3 \leq n < \omega$, the space of generalised $n$-gons with domain $\omega$ is Borel complete.
\end{theorem*}

We conclude by observing that our proof has two components. Firstly, for every $n \geq 3$, we exhibit a Borel construction which codes countable trees into countable generalised $n$-gons. Then, we make an appropriate use of Ronan's construction toward a proof of our main theorem. This paper is a vast generalization of the Borel completeness of countable projective planes due to the first named author \cite{pao}.
 
\section{Preliminaries}

We define in this section some preliminary notions about generalised polygons and buildings. We refer the reader to \cite{maldeghem} for an introduction to generalised polygons, and to \cite{abra,weiss} for an introduction to the theory of buildings.

\subsection{Generalised polygons}\label{subsec:polyogons}

We first recall some standard definitions from graph theory, which we will often employ in this work. By a graph we mean a structure of the form $(G, E)$ with $E$ a binary, irreflexive and symmetric relation on $G$.

\begin{definition}\label{def:graphs}
	Let $(G,E)$ be a graph, then we define the following:
	\begin{enumerate}[(i)]
		\item the \emph{valency} of an element $a\in G$ is the size of the set $\{b\in G : aEb \}$ of neighbours of $a$;
		\item  the \emph{distance} $d(a,b) = d(a, b/G)$ between two elements $a,b\in G$ is the length $n$ of the shortest path $a=c_0Ec_1E\dots E c_{n-1}Ec_n=b$ in $G$, and it is $\infty$ if there is no such path;
		\item the \emph{girth} of a graph is the length of its shortest cycle;
		\item the \emph{diameter} of a graph is the maximal distance between any two elements;
		\item the graph $G$ is \emph{bipartite} if there are $A,B\subseteq G$ such that $A\cap B=\emptyset$, $A\cup B= G$, and every edge of $G$ has one vertex in $A$ and one vertex in $B$;
		\item a graph $G$ is a \emph{tree} if it is connected, bipartite and acyclic.
	\end{enumerate}
\end{definition}

\begin{definition}[Generalised $n$-gon]\label{def_ngon}
	Let $L$ be the two-sorted language where the symbols $p_0,p_1,\dots$ denote \emph{points} and the symbols $\ell_0,\ell_1,\dots$ denote \emph{lines}, the symbol $p\, \vert\,\ell$ (or $\ell\, \vert\,p$) means that the point $p$ is \emph{incident} with the line $\ell$.  For every $n\geq 3$, we define a \emph{generalised $n$-gon} as a bipartite graph (with points and lines) with girth $2n$ and diameter $n$. A \emph{partial $n$-gon} is a model of the universal fragment of the theory of generalised $n$-gons, i.e., it is a bipartite graph with girth at least $2n$.
\end{definition}

\noindent In the previous definition, and in most of what follows, we assume that $n \geq 3$; the cases $n=2,\infty$ are often viewed as degenerate and they correspond to complete bipartite graphs (for $n =3$), and to trees (for $n=\infty$) (cf.~\cite[\S1.3]{maldeghem}).

We recall the free completion process for generalised $n$-gons, which was originally introduced by Tits in \cite{tits}. We give a slight generalisation of this construction which applies both to connected and to non-connected partial $n$-gons (and which we introduced already in \cite{PQ}). For connected partial $n$-gons the following definition is equivalent to the definition given by Tits.

\begin{definition}\label{def_free_completion}
	Let $A$ be a partial $n$-gon. For $i=0$ we let $A_0\coloneqq A$. For $1\leq i<\omega$ we obtain $A_{i+1}$ by adjoining $n-2$ new elements $z_1,\dots,z_{n-2}$ such that $z_i\,\vert\, z_{i+1}$ for all $1\leq i\leq n-3$, $a\,\vert\, z_1$, $b\,\vert\, z_{n-2}$ (and each $z_i$ for $1\leq i\leq n-2$ has no additional incidence), for any two elements $a,b\in A_i$ such that:
	\begin{enumerate}[(i)]
		\item either $d(a,b/A_i)=n+1$;
		\item or $d(a,b/A_i)=\infty$ and $a,b$ are of the appropriate sorts (i.e., $a,b$ have the same sort if $n$ is odd and have different sorts if $n$ is even).
	\end{enumerate}
	The structure $F(A):=\bigcup_{i < \omega}A_i$ is called the \emph{free completion} of $A$ and we say that $F(A)$ is freely generated over $A$. We say that $A$ is \emph{non-degenerate} if $F(A)$ is infinite, and \emph{degenerate} otherwise.
\end{definition}

\begin{remark}\label{non-degeneracy}
	It was originally proved by Tits in \cite{tits} that, if $A$ is a partial non-degenerate $n$-gon, then $F(A)$ is an infinite generalised $n$-gon (cf. also \cite[p.~755]{funk2} and see \cite{PQ} for a reference considering exactly the definition above). We recall also that, if $A$ is a connected partial $n$-gon, then $A$ is non-degenerate exactly if it contains a cycle of length $2n+2$, or if it has diameter $\geq n+2$ (cf. \cite[p.~755]{funk2}). We shall often use this characterisation in Section~\ref{sec_n_gons} and Section~\ref{sec_building}.
\end{remark}

\begin{notation}
	Given a partial $n$-gon $A$, a chain of $(n-2)$-elements $z_1,\dots,z_{n-2}$ such that $a\,\vert\, z_1$, $b\,\vert\, z_{n-2}$, and every elements in $\{z_1, ..., z_{n-2}\}$ has exactly valency 2, is called a \emph{clean arc} between $a$ and $b$. We stress in particular that $a$ and $b$ do \emph{not} belong to the clean arc, and that we often refer to them as the \emph{endpoints} of the clean arc $\bar{z}=(z_1,\dots, z_{n-2})$. Notice also that the parity of a clean arc depends on the underlying value of $n$: clean arcs are even for even $n$ and odd for odd $n$.
\end{notation}

The following definition is as in \cite{tent,PQ} and it slightly differs from others from the literature (e.g. \cite{funk2}). We refer to \cite{PQ} for motivations and more background.

\begin{definition}\label{ngons:def_hyperfree}
	Let $A$ be a finite partial $n$-gon, we say that a tuple $\bar{a}\in A^{<\omega}$ is \emph{hyperfree in $A$} if it satisfies one of the following conditions:
	\begin{enumerate}[(i)]
		\item $\bar{a}=(a)$ is a \emph{loose end}, i.e., it is a single element incident with at most one element in $A$;
		\item $\bar{a}=(a_1,\dots, a_{n-2})$ is a \emph{clean arc}, i.e., it forms a chain $a_1\, \vert \, a_2\, \vert\ \dots\, \vert\ a_{n-3}\, \vert\,a_{n-2}$ where each $a_i$ for $1\leq i\leq n-2$ is incident to exactly two elements from $A$.
	\end{enumerate}
\end{definition}

\noindent The following notion of ``confined configuration'' plays an important role in the study of free constructions in incidence geometry, and it was studied at length by Funk and Strambach in \cite[Def.~4]{funk2}. We stress that, as our definition of hyperfree configuration from Definition~\ref{ngons:def_hyperfree} is slightly different from \cite[p.~752]{funk2}, also our characterisation of confined partial $n$-gon differs from the one provided in \cite[p.~753]{funk2}. This  definition of confined configuration generalises the concept of \emph{confined plane} from the theory of projective planes (cf. \cite[p.~220]{projective_planes}).

\begin{definition}[Confined generalised $n$-gon]\label{def_confined_completion}
	A finite partial $n$-gon $A$ is said to be \emph{confined} if it does not contain any tuple hyperfree in $A$. An arbitrary partial $n$-gon $A$ is said to be \emph{confined} if every element $x\in A$ is contained in a finite subset $B\subseteq A$ which is confined.
\end{definition}

\begin{remark}\label{remark:confined}
	We recall that the case $n=3$ of generalised $n$-gon is exactly the case of projective planes. In this case, the free completion process from \ref{def_free_completion} coincides with the free completion process for partial projective planes originally introduced by Hall in \cite{hall_proj}. Then Definition~\ref{ngons:def_hyperfree} simply says that an element in a projective plane is hyperfree if and only if it has valency $\leq 2$, and Definition~\ref{def_confined_completion} says that a  partial projective plane is confined if and only if every element has valency $\geq 3$ (cf. \cite[p.~752]{funk2} and \cite[p.~220]{projective_planes}). More generally, it is easy to deduce from \ref{def_confined_completion} that a partial generalised $n$-gon $A$ is confined if and only if every element in $A$ has valency $\geq 2$ and, moreover, every chain of $(n-2)$-many elements in $A$ contains one element with valency $\geq 3$.
\end{remark}

 We conclude this section with the following lemma, which will be a crucial ingredient of our Borel construction.

\begin{lemma}\label{lemma_confined} Let $A$ be a partial confined generalised $n$-gon. If $B \subseteq F(A)$ (where $F(A)$ is as in \ref{def_free_completion}) is finite  and confined, then $B \subseteq A$.
\end{lemma}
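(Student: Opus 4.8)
The plan is to exploit the stratified structure of the free completion $F(A)=\bigcup_{i<\omega}A_i$ from Definition~\ref{def_free_completion}. Recall that $A_0=A$ and that $A_{i+1}$ is obtained from $A_i$ by attaching, for each pair $a,b\in A_i$ satisfying clause (i) or (ii) of Definition~\ref{def_free_completion}, one fresh clean arc with endpoints $a,b$; the arcs attached at a single stage are pairwise disjoint, each has both endpoints already in $A_i$, and $A_{i+1}\setminus A_i$ is precisely the set of interior vertices of these arcs. The first step is to isolate the following local fact about $F(A)$: if $\bar w=(w_1,\dots,w_{n-2})$ is a clean arc attached at stage $i+1$, with endpoints $w_0:=a$ and $w_{n-1}:=b$ in $A_i$, then in all of $F(A)$ each interior vertex $w_j$ ($1\le j\le n-2$) has $w_{j-1}$ and $w_{j+1}$ among its neighbours, and every further neighbour of $w_j$ is an element introduced at some stage strictly greater than $i+1$. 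Indeed, $w_j$ is not an endpoint of any arc attached at stage $i+1$ (those endpoints lie in $A_i$) and it is interior to exactly one such arc, namely $\bar w$; the only way $w_j$ can acquire extra incidences at later stages is by being used as an endpoint of an arc attached at some stage $>i+1$, and such an arc is built out of fresh elements. In particular, every neighbour of $w_j$ other than $w_{j-1},w_{j+1}$ lies outside $A_{i+1}$.

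Granting this, I would argue by contradiction. Suppose $B\subseteq F(A)$ is finite and confined but $B\not\subseteq A$, and let $i+1$ be least with $B\subseteq A_{i+1}$. Then $B$ meets $A_{i+1}\setminus A_i$, so some $z\in B$ is an interior vertex $w_j$ of a clean arc $\bar w=(w_1,\dots,w_{n-2})$ attached at stage $i+1$, with endpoints $a=w_0,b=w_{n-1}\in A_i$. By the local fact, the neighbours of $w_j$ in $B$ lie among $\{w_{j-1},w_{j+1}\}$. Since $B$ is finite and confined it contains no tuple hyperfree in $B$ (Definition~\ref{def_confined_completion}), in particular no loose end, so every element of $B$ has at least two neighbours in $B$; hence $w_j$ has exactly two, namely $w_{j-1}$ and $w_{j+1}$, and both lie in $B$. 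Propagating this argument along the arc — downwards from $w_j$ to $w_1$ and then to $a$, and upwards from $w_j$ to $w_{n-2}$ and then to $b$ — shows that $a,w_1,\dots,w_{n-2},b\in B$ and that each $w_k$ with $1\le k\le n-2$ is incident in $B$ to exactly the two elements $w_{k-1},w_{k+1}$. Thus $(w_1,\dots,w_{n-2})$ is a clean arc in $B$, i.e.\ a tuple hyperfree in $B$ in the sense of Definition~\ref{ngons:def_hyperfree}, contradicting the confinedness of $B$. Hence $B\subseteq A_0=A$.

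The one delicate point is the local fact of the first paragraph: one must check that a newly created interior vertex cannot gain an extra neighbour at its own stage or at any earlier stage, but only strictly later, so that any such extra neighbour is a fresh element lying outside $A_{i+1}\supseteq B$. Given that, the remainder is bookkeeping with the definitions of clean arc and hyperfree tuple together with the inductive propagation along $\bar w$. The argument is uniform in $n\ge 3$: for $n=3$ the tuple $(w_1)$ being a clean arc in $B$ just records that $w_1$ is incident to exactly two elements of $B$, which is already hyperfree and hence already contradicts confinedness. (Note also that the argument never uses that $A$ itself is confined.)
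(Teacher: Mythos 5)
Your proof is correct and follows essentially the same route as the paper's: both take the least stage $A_m$ containing $B$, locate a clean arc added at that stage which meets $B$, and derive a hyperfree tuple in $B$ (a loose end or a clean arc), contradicting confinedness. The only cosmetic difference is that the paper splits into two cases (whole arc plus endpoints contained in $B$ or not) whereas you unify them by propagating the no-loose-end condition along the arc; your ``local fact'' about the valencies of interior vertices of a newly attached arc is exactly the observation the paper uses implicitly.
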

\begin{proof}
	
	For each $i<\omega$ we let $A_i$ be as in Definition~\ref{def_free_completion}. Since $B$ is finite there is a smallest $n<\omega$ such that $B\subseteq A_{n}$. If $n=0$ then $B\subseteq A_0=A$ and the claim follows. Thus we can assume that $n\geq 1$. Then it follows from  Definition~\ref{def_free_completion} that there is a clean arc $\bar{c}=(c_1,\dots, c_{n-2})$ in $A_n$ with endpoints $a,b\in A_{n-1}$ such that $\{c_1,\dots, c_{n-2}\}\cap B\neq \emptyset$. We now have two cases. 
\noindent
\newline \underline{Case 1}. $\{ c_1,\dots, c_{n-2}, a,b \} \subseteq B$.
\newline Since $B\subseteq A_{n}$ in this case we have that $c_1,\dots, c_{n-2}$ is a clean arc also in $B$, and thus, by Definition~\ref{ngons:def_hyperfree}(ii) it is hypefreee in $B$. By Definition~\ref{def_confined_completion} we immediately conclude that $B$ is not confined.
\newline \underline{Case 2}. $\{ c_1,\dots, c_{n-2}, a,b \} \not \subseteq B$.
\newline Since $B\subseteq A_{n}$ and every element from $c_1,\dots, c_{n-2}$ has valency 2 in $A_n$, it follows that in $B$ one of these elements has valency $\leq 1$, which by Definition~\ref{ngons:def_hyperfree}(i) means that it is hyperfree. Then, it follows again from Definition~\ref{def_confined_completion} that $B$ is not confined, which completes the proof of the lemma.
\end{proof}

\subsection{Chambers complexes and buildings}

As outlined in the introduction to \cite{abra}, buildings can be approached from (at least) three different perspective: the \emph{simplicial} approach, the \emph{combinatorial} approach, and the \emph{metric} approach. We follow here the (original) simplicial approach to buildings and recall some basic definitions from \cite[\S A.1, \S4.1]{abra} and \cite[\S1, \S3]{tits_book}.

\begin{definition}[Simplicial complexes]\label{def_simplices}
	A \emph{simplicial complex} is a set  $\Delta$ endowed with a partial order $\leq$ satisfying the following two conditions:
	\begin{enumerate}[(i)]
		\item any two elements $a,b\in \Delta$ have a greatest lower bound $a\land b$
		\item for any $a\in \Delta$, the poset $\Delta_{\leq a}\coloneqq\{b\in \Delta : b\leq a  \}$ is isomorphic to the poset $(\wp(r), \subseteq  )$ of subsets of $\{1, . . . , r\}$ ordered by inclusion, for some integer $r\geq 0$.
	\end{enumerate}
The elements of $\Delta$ are called \emph{simplices}. A simplex $a\in \Delta$ is \emph{maximal} if $b\in \Delta$ and $a\leq b$ entail $a=b$. For every simplex $a\in \Delta$, the elements in $\Delta_{\leq a}$ are called the \emph{faces} of $a$; the number $r$ from \ref{def_simplices}(ii) above is the \emph{rank} of $a$ and $r-1$ is its \emph{dimension}, which we denote by $\mrm{dim(a)}$. The \emph{codimension} of a face $b$ of $a$ (with respect to $a$) is the number $\mrm{dim}(a)-\mrm{dim}(b)$. The elements of $\Delta$ of rank 1 (or, equivalently, of dimension $0$) are called the \emph{vertices} of $\Delta$. A \emph{simplicial complex} $\Delta$ is said to be \emph{finite dimensional} if there is a largest $n<\omega$ such that some $a\in \Delta$ has dimension $n$, and $n$ is said to be the dimension of $\Delta$.
\end{definition}

\begin{definition}[Chamber complex]\label{def:chamber_complex}
	Let $\Delta$ be a finite dimensional simplicial complex. We say that $\Delta$ is a \emph{chamber complex} if the following conditions hold:
	\begin{enumerate}[(i)]
		\item all maximal simplices have the same dimension;
		\item given two distinct maximal simplices $a,b\in \Delta$ there is some $n<\omega$ and some sequence of maximal simplices $c_0,\dots,c_{n}$ with $c_0=a$, $c_{n}=b$ and such that for all $i< n$ the simplex $c_i\land c_{i+1}$ is of dimension $\mrm{dim}(a)-1$ ($=\mrm{dim}(b)-1$).
	\end{enumerate}
We say that $\Sigma\subseteq \Delta$ is a \emph{chamber subcomplex} of $\Delta$ if it is a chamber complex under the same order relation $\leq$ from $\Delta$ and, additionally, its maximal simplices have the same dimension as those from $\Delta$.
\end{definition}

\begin{notation}\label{notation_chamber complexes}
	In the context of a chamber complex $\Delta$, its maximal simplices are usually called \emph{chambers}. Since the chambers of $\Delta$ all have the same dimension, any simplex $b$ has the same codimension with respect to any two chambers, and we thus simply refer to this value as the \emph{codimension of a simplex $b\in \Delta$}. The simplices of codimension $1$ in $\Delta$ are called \emph{panels}. A chamber complex is \emph{thin} if each panel is a face of exactly two chambers, and it is \emph{thick} if every panel is a face of at least three chambers. Two chambers $a,b\in \Delta$ are \emph{adjacent} if they are distinct and they have a common face of codimension 1, i.e., a common panel. A sequence $\gamma=(d_0,\dots,d_{n})$ of chambers such that $d_i$ is adjacent to $d_{i+1}$ for all $i< n$ is called a \emph{gallery}. Thus, condition~(ii) from Definition~\ref{def:chamber_complex} is simply saying that any two chambers in $\Delta$ are joined by a gallery of length $n$, for some $n \in \mathbb{N}$. The \emph{distance} $d(x,y/\Delta)$ of two chambers  $x,y\in \Delta$ is the least $n<\omega$ such that $\gamma=(d_0,\dots,d_{n})$ is a gallery with $d_0=x$ and $d_n=y$ (this coincides with the graph-theoretic distance if we view $\Delta$ as a graph with edges induced by the adjacency relation).
\end{notation}

The following definition of building is from Tits (cf.~\cite[\S3.1]{tits_book}), and it introduces buildings as chambers complexes made of \emph{apartments} (cf. also \cite[\S1.1]{busch}). We notice that sometimes buildings are not assumed to be thick (e.g., in \cite[Def.~4.1]{abra}). The only practical difference between these two definitions is that in the second case also the Coxeter complex can be predicated of the term ``building'' (cf.~\ref{coxeter_complex} below).

\begin{definition}[Building]\label{def_building}
	A building is a thick chambers complex $\Delta$ satisfying the following conditions:
	\begin{enumerate}[(i)]
		\item $\Delta$ is the union of thin chamber subcomplexes (which we call \emph{apartments});
		\item for any two simplices, there is an apartment containing both of them;
		\item every two apartments are isomorphic through an isomorphism fixing every vertex in their intersection.
	\end{enumerate}
\end{definition}

\begin{example}\label{example:ngons_as_buildings}
	Let $A$ be a generalised $n$-gon for some $n\geq 3$ and suppose also that $A$ is \emph{firm}, i.e., that every element has valency at least 2. A subset $F$ of $A$ is a \emph{flag} if every two distinct elements from $F$ are incident to each other. It is immediate to verify that there are only four types of flags, the empty flag $\emptyset$, the one-element flags $\{p\}$ consisting of only one point $p\in A$, the one-element flags $\{\ell\}$ consisting of only one line $\ell\in A$, and the two-elements flags $\{p,\ell\}$ consisting of one point $p\in A$ and one line $\ell\in A$ such that $p\, \vert \, \ell$. We call the set of all flags of $A$ ordered by the inclusion relation the \emph{flag complex} associated to $A$, and we often denote it by $\Delta_A$. We leave it to the reader to verify that this is a building whose apartments are the regular $n$-gons, i.e., the cycles of length $2n$. See \cite[\S4.2]{abra} and \cite[\S1.3.7]{maldeghem} for more details on this construction. In Remark~\ref{remark:spherical_buildings} we shall also see that, if $\Delta$ is a  building of rank 2, then it is either a generalised $n$-gon with $n\geq 3$, a complete bipartite graph, or a tree without endpoints.
\end{example}

We conclude this section by recalling the connection between buildings and Coxeter groups, and some associated basic definitions.

\begin{definition}
	A (finitely generated) \emph{Coxeter group} is a group $W$ that admits a presentation of the form \[\langle s_1,\ldots,s_n \ \vert \ (s_is_j)^{m_{ij}}=1, \ \text{for all} \ i,j \rangle\] where $0\neq m_{ij}\in \mathbb{N}\cup \lbrace \infty\rbrace$ for every $1\leq i,j\leq n$ and $m_{ij}=1$ if and only if $i = j$ (the relation $(s_is_j)^{\infty}=1$ means that $s_is_j$ has infinite order). Note that each generator $s_i$ has order two, and that $m_{ij}=2$ if and only if $s_i$ and $s_j$ commute. The set $S=\lbrace s_1,\ldots,s_n\rbrace$ is called a \emph{Coxeter generating set}, and the pair $(W,S)$ is called a \emph{Coxeter system}. The \emph{Coxeter diagram} $D=(S,M)$ associated to the Coxeter system $(W,S)$ is the following labelled graph: the generators $S=\{s_1,\dots,s_n\}$ are the vertices of $D$, and for all generators $s_i,s_j$ there is a unique edge in $M$ between $s_i$ and $s_j$ labelled by $m_{ij}$. We often draw these graphs by omitting the edges labelled by 2, and by drawing edges labelled by 3 without a label. We say that $(W,S)$ has \emph{type} $D$ when $D$ is the Coxeter diagram of the system $(W,S)$. Also, notice that we will later often identify the set $S$ with its index set $I=\{1,\dots,n\}$ and write $i,j,k,\dots$ for its elements.
\end{definition}

\begin{definition}[Coxeter complex]\label{coxeter_complex}
	Let $(W,S)$ be a Coxeter system with a generating set $S$. A \emph{standard coset} of $W$ is a coset of the form $w\langle J\rangle_W$ for some $w\in W$ and $J\subseteq S$. The \emph{Coxeter complex} $\Sigma(W,S)$ of $(W,S)$ is the poset of standard cosets of $(W,S)$ ordered by reverse inclusion. The \emph{diagram} of the Coxeter complex $\Sigma(W,S)$ is simply the diagram of the Coxeter system $(W,S)$; whence we also say that $\Sigma(W,S)$ has \emph{type} $D$. The \emph{type} of a simplex $w\langle J\rangle_W \in\Sigma(W,S)$ is $S\setminus J$, and its \emph{cotype} is the set $J$ itself.
\end{definition}

\noindent As we shall make more explicit in \ref{notation_for_buildings}, the chambers of a Coxeter complex are essentially the elements of $W$, and their adjacency relation coincides with the incidence relation in the Cayley graph associated to $(W,S)$. The fundamental role of Coxeter complexes in the context of buildings is motivated by the fact that they are exactly the apartments from Definition~\ref{def_building}.

\begin{remark}\label{remark:spherical_buildings}
	Definition~\ref{coxeter_complex} allows one to provide a sort of converse to Example~\ref{example:ngons_as_buildings}, which was originally proved by Tits in \cite{tits_book}. In fact, it can be proved that in any building $\Delta$ all its apartments are isomorphic to the Coxeter complex associated a fixed Coxeter diagram $D$. If $\Delta$ is a building, we can then define the \emph{Coxeter diagram of $\Delta$} as the Coxeter diagram of any of its apartments, and we then say that $\Delta$ is a building of \emph{type} $D$. We say that a building is \emph{irreducible} if its diagram is connected (as a graph). A building (and its corresponding Coxeter diagram) is \emph{spherical} if its corresponding Coxeter group is finite. In particular any rank 2 building has diagram 
\begin{center}
	\begin{tikzcd}
	\bullet \arrow[r, "m", no head] & \bullet
	\end{tikzcd}
\end{center}
\noindent where $2\leq m \in \mathbb{N}$ or $m=\infty$. It follows that every rank 2 building is the flag complex of either a generalised $n$-gon (the case $3\leq n\in \mathbb{N}$), a complete bipartite graph (the case $n=2$), or a tree without endpoints (the case $n=\infty$). We notice that for this reason generalised $n$-gons are sometimes defined also for the two degenerate cases with $n=2$ and $n=\infty$ (recall our remarks in Section~\ref{subsec:polyogons} and see also \cite[\S1]{maldeghem}).
\end{remark}

\begin{definition}\label{def:basic_buildings} Let $(W,S)$ be a Coxeter system of type $D$ and let $\Delta$ be a building of the same type. For any $s\in S$ we say that two chambers $a,b\in \Delta$ are \emph{$s$-adjacent} if they have a common panel of cotype $\{s\}$ (recall Notation~\ref{notation_chamber complexes} and Definition~\ref{coxeter_complex}). Given $J \subseteq S$, we say that two chambers $a,b \in \Delta$ are $J$-\emph{equivalent}, and we write $a \sim_J b$, if there is a gallery $\gamma=(c_0,\dots, c_n)$ with $c_0=a$, $c_n=b$ and such that for every $k<n$ the chambers $c_k$ and $c_{k+1}$ are $j$-equivalent for some $j\in J$. Then  $J$-equivalence is an equivalence relation whose equivalence classes are called $J$-\emph{residues}. We denote the (unique) $J$-residue containing a given chamber $c\in \Delta$ by $R_J(c)$. A subset $\Sigma\subseteq \Delta$ is called a \emph{residue} if it is a $J$-residue for some $J \subseteq S$, and it can therefore be viewed as a building with Coxeter diagram $(W_J, J)$, where $W_J=\langle J\rangle_W$ the standard parabolic subgroup generated by $J$ in $W$, which is known to be a Coxeter group with generators $J$. The restriction of the diagram $D$ to $J$ is called the \textit{type} of the residue, and the cardinality $|J|$ is called the \textit{rank} of $\Sigma$.  In particular, a building $\Delta$ of type $D=(S,M)$ \emph{has no rank 3 residues of spherical type} whenever for every $J\subseteq S$ with  $J$ of size $3$ we have that the Coxeter group $W_J$ is infinite (we shall see below a more direct characterisation of such buildings).
\end{definition}

We will deal in Section~\ref{sec_n_gons} with the Borel completeness problem for generalised $n$-gons, and in Section~\ref{sec_building} with the general case of buildings with no spherical residues of rank 3. Given Coxeter's classification of finite Coxeter groups from \cite{coxeter}, the buildings with no spherical residues of rank 3 can be characterised as those buildings $\Delta$ for which every three vertices $i,j,k$ in their corresponding Coxeter diagram satisfy:  \[\frac{1}{m_{ij}}+\frac{1}{m_{ik}}+\frac{1}{m_{jk}} \leq 1.\]
We refer the reader not acquainted with the literature on buildings to \cite{abra}, \cite{tits_book} and \cite{weiss} for additional background. We stress however that we will not assume any additional knowledge from the theory of buildings. The key ingredient in the proof of our main theorem will be Ronan's free completion method (from \cite{ronan}) for arbitrary buildings with no spherical residues of rank 3. We will recall later in Section~\ref{sec_building} the essential ingredients of Ronan's construction.

\section{Generalised polygons}\label{sec_n_gons}

We provide in this section a proof of the Borel completeness of the Borel space of countable generalised $n$-gons, for all $3\leq n<\omega$. The key strategy is to define a (Borel) coding of countable trees (which are Borel complete) into countable generalised $n$-gons. Firstly, in Section~\ref{ngon:finitary_construction}, we describe a finite combinatorial pattern of confined partial $n$-gons, which we use to capture the edge relation between two elements in a tree. Then, in Section~\ref{Sec:borel_reduction_ngon}, we use this construction to define the Borel reduction from the space of countable trees into the space of countable generalised $n$-gons, for all $3\leq n<\omega$. We notice that the Borel completeness of (non-Desarguesian) projective planes was already showed in \cite{pao} by the first named author of this paper, but we nonetheless consider here also the case $n=3$, for completeness, and to later provide a uniform treatment of buildings in Section~\ref{sec_building}.

\subsection{The finitary construction}\label{ngon:finitary_construction}

The aim of this section is to describe a combinatorial patter of partial generalised $n$-gons to be used in our main construction, the following lemma is the key.

\begin{lemma}\label{the_crucial_lemma} Let $3 \leq n < \omega$.  There is a finite confined  partial generalised $n$-gon $A$ such that, letting $A_1 \cup A_2$ be the disjoint union of two copies of $A$, then there is a finite, confined, and non-degenerate partial generalised $n$-gon $B$ containing $A_1 \cup A_2$ such that the only copies of $A$ in $B$ are $A_1$ and  $A_2$.
\end{lemma}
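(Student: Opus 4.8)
The plan is to build $A$ as a "rigid" confined partial $n$-gon with no nontrivial self-embeddings and no proper sub-copies of itself, and then to glue two disjoint copies $A_1, A_2$ together by a small confined, non-degenerate "connector" gadget in such a way that no third copy of $A$ can appear. The natural candidate for $A$ is a single cycle of length $2n$ (a clean $2n$-gon) in which we have attached, at the various vertices, pendant configurations that force every vertex to have valency $\geq 2$ (and enough valency-$\geq 3$ vertices along each clean arc) so that $A$ is confined by the characterisation in Remark~\ref{remark:confined}; one adds these pendants in an asymmetric pattern (e.g. different numbers of extra incident elements at different vertices of the $2n$-cycle) so that the automorphism group of $A$ is trivial and, more importantly, $A$ does not embed into itself except as the identity. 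Since $A$ is finite and confined, Lemma~\ref{lemma_confined} will be the main tool controlling where copies of $A$ can live: any confined subconfiguration of a free completion already lies in the base.

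Next I would form $B$ from $A_1 \sqcup A_2$ by choosing a vertex $a_1 \in A_1$ and a vertex $a_2 \in A_2$ of the appropriate sorts and joining them by a single clean arc $\bar z = (z_1,\dots,z_{n-2})$, and then adding one further small confined gadget (for instance an extra $2n+2$-cycle, or an extra clean arc creating a cycle of length $2n+2$) incident to the $z_i$'s and to $A_1\cup A_2$, so that (i) every $z_i$ acquires valency $\geq 2$ and some $z_i$ acquires valency $\geq 3$, making $B$ confined, and (ii) $B$ is non-degenerate, which by Remark~\ref{non-degeneracy} holds as soon as $B$ is connected and contains a cycle of length $2n+2$ or has diameter $\geq n+2$ — and the new arc together with existing structure arranges exactly that. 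One must do the gluing carefully enough that $B$ is still a partial $n$-gon, i.e. has girth $\geq 2n$: since $A_1$ and $A_2$ are disjoint and the only new short closed paths pass through the connector, choosing the connector gadget with enough length (a single extra clean arc between two suitably distant vertices) guarantees no cycle shorter than $2n$ is created.

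The heart of the argument is showing that the only copies of $A$ inside $B$ are $A_1$ and $A_2$. Suppose $A'\subseteq B$ is a copy of $A$. Since $A$ is confined and $A'\cong A$, the set $A'$ is a confined subset of $B$. The key point is that $B$ is essentially $A_1\cup A_2$ plus a low-valency connector: every element of $B$ not in $A_1\cup A_2$ (the $z_i$'s and the gadget elements) can be chosen to have valency exactly $2$ in $B$ except for one designated hub, and moreover no clean arc of $B$ meeting the connector has all its elements of valency $\geq 2$ with a valency-$\geq 3$ witness unless it stays inside one $A_i$. So $A'$ cannot use connector elements without violating confinedness (it would contain a hyperfree tuple — a loose end or a clean arc — in $A'$, contradicting Definition~\ref{def_confined_completion}); hence $A' \subseteq A_1 \cup A_2$. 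Because $A$ is connected, $A'$ lies entirely in $A_1$ or entirely in $A_2$, and then the rigidity/no-proper-self-embedding property of $A$ forces $A'=A_1$ or $A'=A_2$.

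The main obstacle I anticipate is the combinatorics of the connector: one must simultaneously make $B$ confined (forcing valency conditions on the new elements, which tends to create new short cycles and new potential copies of $A$), keep girth $\geq 2n$, ensure non-degeneracy, and — most delicately — ensure that no "hybrid" copy of $A$ arises that straddles $A_1$, the connector, and $A_2$. The valency bookkeeping of Remark~\ref{remark:confined} will need to be checked element-by-element, and the rigidity of $A$ itself (triviality of its embedding monoid) must be arranged by an explicit asymmetric decoration of the base $2n$-cycle and verified directly; this is where most of the routine but careful case-checking lives.
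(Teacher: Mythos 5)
Your overall strategy (an explicit finite confined $A$, two disjoint copies joined by a low-valency connector acting as a cut-set, then a localization argument) is the same as the paper's, but the proposal leaves genuine gaps precisely where the mathematical content lies. First, the configuration $A$ is never actually produced. A bare $2n$-cycle is \emph{not} confined: all its elements have valency $2$, so every subchain of $n-2$ consecutive elements is a clean arc and hence hyperfree (Definition~\ref{ngons:def_hyperfree}(ii)). ``Pendant'' decorations do not fix this, since any pendant tree ends in loose ends of valency $\leq 1$; the decorations must themselves close up into cycles, and then every new cycle so created must have length $\geq 2n$ while every $(n-2)$-chain acquires a valency-$\geq 3$ witness. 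Reconciling these two demands is exactly the delicate part, and it is why the paper resorts to specific configurations (the Heawood graph in Construction~\ref{case_n=3}, the Tutte--Coxeter graph in Construction~\ref{case_n=4}, and bespoke chain-of-chains configurations in Constructions~\ref{odd_case} and~\ref{even_case}, each with an explicit girth computation). Asserting that an ``asymmetric decoration'' exists is not a proof that one does; moreover the rigidity you ask for is superfluous, since once a copy $A'$ satisfies $A'\subseteq A_1$ finiteness already forces $A'=A_1$.

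Second, and more seriously, the localization step fails as stated. You take the connector to be a clean arc $z_1,\dots,z_{n-2}$ and then attach a gadget so that some $z_i$ gets valency $\geq 3$ (you must, since otherwise the connector itself is a hyperfree clean arc and $B$ is not confined). But once some $z_i$ has valency $\geq 3$ in $B$, a hybrid copy $A'$ of $A$ containing $z_i$ together with part of the gadget is no longer excluded by confinedness of $A'$: the chain through the connector need not be hyperfree in $A'$, and $z_i$ need not be a loose end. Confinedness of $A'$ alone is simply not enough to push $A'$ off the connector. What is actually needed --- and what the paper uses --- is a \emph{structural} property of $A$ itself: every two elements of $A$ lie on a common cycle (equivalently, are joined by two internally disjoint paths), while every path in $B$ from $A_1$ to $A_2$ passes through a fixed cut element of the connector; a straddling copy of $A$ would then need two disjoint $A_1$--$A_2$ paths, which is impossible. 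You neither state this $2$-connectivity property nor arrange it in your candidate $A$ (a decorated cycle with pendants would typically fail it at the attachment points). Note also how the paper sidesteps your gadget problem for $n\geq 5$: the connector $d_1,\dots,d_{n-4}$ is deliberately of length $n-4<n-2$, so it is too short to be a clean arc, all its elements may keep valency $2$, and its endpoints in $A_1,A_2$ already have valency $3$; no extra gadget (and hence no extra cycles or hybrid copies) is ever introduced.
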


We prove Lemma~\ref{the_crucial_lemma} by constructing different configurations $A$ and $B$ for all $3 \leq n < \omega$. We deal with the cases $n=3$ and $n=4$ in Construction~\ref{case_n=3} and Construction~\ref{case_n=4}. For all $n\geq 5$ we provide a uniform treatment which depends solely on the parity of $n$: we consider the odd case in Construction~\ref{odd_case} and the even case in Construction~\ref{even_case}. Together, these constructions establish Lemma~\ref{the_crucial_lemma}.

\begin{construction}[$n=3$]\label{case_n=3} We let $A$ be the incidence graph associated to the Fano plane, i.e., the projective plane $P(\mathbb{F}_2)$ over the two-elements field $\mathbb{F}_2$. We consider its associated incidence graph depicted below, where white dots represent lines and black dots represent points (cf.~\cite[p.~180]{abra}) --- this is also called the \emph{Heawood graph} in the literature (cf.~\cite[p.~209]{brouwer}). Formally, this is defined as follows. Let $\{a_{2i}:1\leq i\leq 7\}$ be a set of points and let $\{a_{2i+1}:0\leq i\leq 6\}$ be a set of lines. We let $a_i\;\vert\; a_{i+1}$ for all $1\leq i\leq 13$ and also $a_{14}\; \vert\; a_1$. Moreover, we add the incidences  $a_{1}\; \vert\; a_{10}$, $a_{2}\; \vert\; a_{7}$, $a_{3}\; \vert\; a_{12}$, $a_{4}\; \vert\; a_{9}$, $a_{5}\; \vert\; a_{14}$, $a_{6}\; \vert\; a_{11}$, and $a_{8}\; \vert\; a_{13}$.

	 	\begin{figure}[H]
\centering 
	 	\begin{tikzpicture}[scale=2]
	 	
	 	\foreach \i in {1,3,5,7,9,11,13} {
	 		\node[circle,draw,fill=white,inner sep=1.5pt,label={\i*360/14:$a_{\i}$}]  (\i) at ({360/14 * (\i-1)}:1) {};
	 	}
 	
 	\foreach \i in {2,4,6,8,10,12,14} {
 		\node[circle,draw,fill=black,inner sep=1.5pt,label={\i*360/14:$a_{\i}$}]  (\i) at ({360/14 * (\i-1)}:1) {};
 	}
	 	
	 	\draw
	 	(1) -- (2)  
	 	(2) -- (3)  
	 	(3) -- (4) 
	 	(4) -- (5)
	 	(5) -- (6) 
	 	(6) -- (7) 
	 	(7) -- (8) 
	 	(8) -- (9) 
	 	(9) -- (10) 
	 	(10) -- (11)
	 	(11) -- (12)  
	 	(12) -- (13)  
	 	(13) -- (14) 
	 	(14) -- (1)

	 	(1) -- (10) 
	 	(2) -- (7) 
	 	(3) -- (12) 
	 	(4) -- (9)
	 	(5) -- (14) 
	 	(6) -- (11) 
	 	(8) -- (13) 
	 	;
	 	
	 	\end{tikzpicture}
	 	\caption{\label{fig:figure1} The Heawood Graph.}	 
	 	\end{figure}
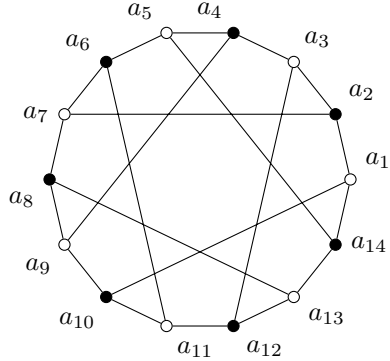
 	
	 \noindent It is immediate to verify that the diagram above has diameter 3 and girth 6, and thus forms a generalised $3$-gon, i.e., a projective plane. Since every element has valency 3, it also follows that it is confined (recall Remark~\ref{remark:confined}). To show that Lemma~\ref{the_crucial_lemma} holds for $n=3$ it thus only remains to define a confined, non-degenerate configuration $B$ which contains two disjoint copies $A_1, A_2$ of $A$ and such that $B$ does not contain any other isomorphic copy of $A$.
	 
	 \smallskip \noindent Let $A_1,A_2$ be two isomorphic copies of the Heawood graph  with vertices $\{a_i \}_{1\leq i\leq 14}$ and $\{b_i \}_{1\leq i\leq 14}$. We define $B\coloneqq A_1\cup A_2\cup \{p,\ell\}$, where $p$ is a point incident to $a_1$ and $b_1$; $\ell$ is a line incident to $a_4$ and $b_4$, and additionally we let  $p\,\vert\, \ell$. It is immediate to check that $B$ is a partial projective plane and, since every element has valency 3, that it is confined (again by Remark~\ref{remark:confined}). Also, since $a_2$ and $b_3$ have distance 5 in $B$, it follows from Remark~\ref{non-degeneracy} that $B$ is non-degenerate. We verify that $B$ does not contain any other copy of $A$ than $A_1,A_2$. Suppose $\pi:A\to B$ is an embedding, if $p\in \pi(A)$ then $p$ must have valency 3 in $\pi(A)$ and thus $\ell,a_1,b_1\in \pi(A)$. Since $\ell$ must also have valency 3 in $\pi(A)$, we also derive that $a_4,b_4\in \pi(A)$. Now, $a_1$ must have valency $3$ in $\pi(A)$, so two elements from $\{a_2,a_{14},a_{10}\}$ must be in $\pi(A)$. Suppose without loss of generality that $a_{10}\in \pi(A)$ (the argument is analogous for $a_2\in \pi(A)$ or $a_{14}\in \pi(A)$), then it follows by the fact that $a_{10}$ must have valency $3$ in $\pi(A)$ that also $a_9$ and $a_{11}$ belong to $\pi(A)$. Repeating the same argument it follows that every element $a_i$ for $5\leq i\leq 14$ is in $\pi(A)$. It follows that 
	 $ \{a_i : 4\leq i\leq 14\}\cup \{a_1\}\cup \{p,\ell\}\cup \{b_1,b_4 \}\subseteq \pi(A), $
	 which is clearly impossible since $\pi $ is an injective and so $|\pi(A)|=14$. It follows that $\pi(A)$ does not contain $p$ and, by a similar argument, that it does not contain $\ell$. Since every path in $B$ from one element in $A_1$ to an element in $A_2$ must contain either $p$ or $\ell$, and since every two elements in $A$ are joined by a path, this entails that $\pi(A)=A_1$ or $\pi(A)=A_2$. We conclude that $A$ and $B$ satisfy Lemma~\ref{the_crucial_lemma}.
	 \end{construction}

\begin{construction}[$n=4$]\label{case_n=4}
In the case with $n=4$ we consider the so-called Cremona-Richmond configuration, i.e., the unique generalised quadrangle $\mrm{GQ}(2,2)$ where each point lies on three lines and each line lies on three point. As in the previous case of the Fano plane, we view it from the graph-theoretic perspective via its associated incidence graph, the so-called \emph{Tutte-Coxeter graph}, which is represented right below (cf.~\cite[p.~209]{brouwer}). Formally, we define it as follows. Let $\{a_{2i}:1\leq i\leq 15\}$ be a set of points and $\{a_{2i+1}:0\leq i\leq 14\}$  a set of lines. We let $a_i\;\vert\; a_{i+1}$ for all $1\leq i\leq 29$ and also $a_{30}\; \vert\; a_1$. Moreover, we add the incidences $a_{1}\; \vert\; a_{18}$, $a_{2}\; \vert\; a_{23}$, $a_{3}\; \vert\; a_{10}$, $a_{4}\; \vert\; a_{27}$, $a_{5}\; \vert\; a_{14}$, $a_{6}\; \vert\; a_{19}$, $a_{7}\; \vert\; a_{24}$, $a_{8}\; \vert\; a_{29}$, $a_{9}\; \vert\; a_{16}$, $a_{11}\; \vert\; a_{20}$, $a_{12}\; \vert\; a_{25}$, $a_{13}\; \vert\; a_{30}$, $a_{15}\; \vert\; a_{22}$, $a_{17}\; \vert\; a_{26}$, $a_{21}\; \vert\; a_{28}$.

\begin{figure}[h]
	\centering
	\begin{tikzpicture}[scale=2.7]
	
	\foreach \i in {1,3,5,7,9,11,13,15,17,19,21,23,25,27,29} {
		\node[circle,draw,fill=white,inner sep=1.5pt,label={\i*360/30:$a_{\i}$}]  (\i) at ({360/30 * (\i-1)}:1) {};
	}

\foreach \i in {2,4,6,8,10,12,14,16,18,20,22,24,26,28,30} {
	\node[circle,draw,fill=black,inner sep=1.5pt,label={\i*360/30:$a_{\i}$}]  (\i) at ({360/30 * (\i-1)}:1) {};
}

	\draw
	(1) -- (2)  
	(2) -- (3)  
	(3) -- (4) 
	(4) -- (5)
	(5) -- (6) 
	(6) -- (7) 
	(7) -- (8) 
	(8) -- (9) 
	(9) -- (10) 
	(10) -- (11)
	(11) -- (12)  
	(12) -- (13)  
	(13) -- (14) 
	(14) -- (15)
	(15) -- (16) 
	(16) -- (17) 
	(17) -- (18) 
	(18) -- (19) 
	(19) -- (20) 
	(20) -- (21)
	(21) -- (22)  
	(22) -- (23)  
	(23) -- (24) 
	(24) -- (25)
	(25) -- (26) 
	(26) -- (27) 
	(27) -- (28) 
	(28) -- (29) 
	(29) -- (30) 
	(30) -- (1) ;

	\draw
	(1) -- (18) 
	(2) -- (23) 
	(3) -- (10)
	(4) -- (27)
	(5) -- (14)
	(6) -- (19)
	(7) -- (24)
	(8) -- (29)
	(9) -- (16)
	(10) -- (3)
	(11) -- (20)
	(12) -- (25)
	(13) -- (30)
	(15) -- (22)
	(17)--(26)
	(21)--(28)
	;
	
	\end{tikzpicture}
\caption{\label{fig:figure2} The Tutte-Coxeter Graph.}	 
\end{figure}
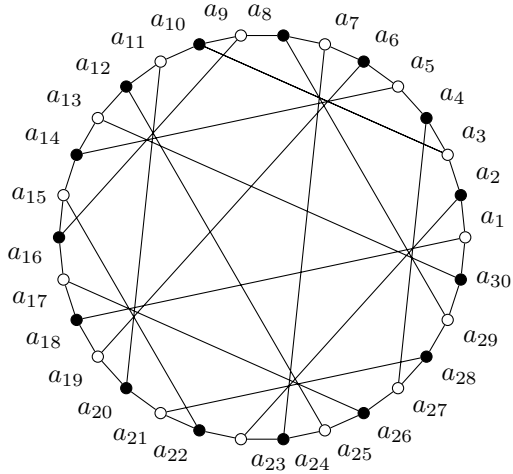

\noindent It is easy to verify that the diagram above has diameter 4 and girth 8, and thus it is a generalised quadrangle. Moreover, since every element has valency 3, it immediately follows by Remark~\ref{remark:confined} that it is also confined. It remains to define the configuration $B$ and to show that it satisfies Lemma~\ref{the_crucial_lemma}. 
	 
\smallskip \noindent Let $A_1$ and $A_2$ be two isomorphic copies of the Tutte-Coxeter graph with vertices $\{a_i \}_{1\leq i\leq 30}$ and $\{b_i \}_{1\leq i\leq 30}$, respectively. We define $B\coloneqq A_1\cup A_2\cup \{c\}$ where $c$ is a point incident to the two lines $a_1,b_1$. Since $c$ does not introduce any new cycle, $B$ is still a partial quadrangle, and since $c$ has valency 2 and it does not belong to any clean arc $d_1\,\vert\, d_2$ of length 2, $B$ is still confined (recall Remark~\ref{remark:confined}). Also, since $a_4$ and $b_4$ have distance 8 in $B$, it also follows from Remark~\ref{non-degeneracy} that $B$ is non-degenerate. Finally, we notice that if $\pi:A\to B$ is an embedding then $c\notin \pi(a)$, since $c$ has only valency 2 in $B$, while every element in $A$ has valency 3. Therefore, if $\pi(A)$ contains two elements $x\in A_1$ and $y\in A_2$, there can be no path between these two elements in $\pi(A)$, since in $B$ the only path between them passes through $c$. It follows that $\pi(A)=A_1$ or $\pi(A)=A_2$ and thus that $B$ satisfies Lemma~\ref{the_crucial_lemma}.
\end{construction}

\begin{construction}[$n=2m+1$ for $m\geq 2$]\label{odd_case} We address the odd case for all $n\geq 5$. Fix $n=2m+1$ for some $m\geq 2$, we define a partial confined $n$-gon $A$ as follows. For $k=1,3,5,7 $ we let $a_k^1\,\vert\,\dots \,\vert\, a_k^{n-2}$ be a chain such that $a_k^1$ and $a_k^{n-2}$ are both points (this is possible because $n$ is odd). Similarly, for $k= 2,4,6$ we let $a_k^1\,\vert\,\dots\,\vert\, a_k^{n-2}$ be a chain such that $a_k^1$ and $a_k^{n-2}$ are both lines. Additionally, we let $a_k^{n-2}\,\vert\, a_{k+1}^{1}$ for all $1\leq k\leq 6$. For $k=1,3$ we let $b_k^1\,\vert\,\dots\,\vert\,b_k^{n-3}$ be a chain with $b_k^1$ a line and $b_k^{n-3}$ a point; for $k=2,4$ we let $b_k^1\,\vert\,\dots\,\vert\,b_k^{n-3}$ be a chain with $b_k^1$ a point and $b_k^{n-3}$ a line. Additionally, we let $a_1^1\,\vert\, b_1^1$, $a_2^1\,\vert\, b_2^1$, $a_3^1\,\vert\, b_3^1$, $a_4^1\,\vert\, b_4^1$ and also $a_4^{n-2}\,\vert\, b_1^{n-3}$, $a_5^{n-2}\,\vert\, b_2^{n-3}$, $a_6^{n-2}\,\vert\, b_3^{n-3}$, $a_7^{n-2}\,\vert\, b_4^{n-3}$. Finally, we let $c_1\,\vert\,\dots \,\vert\, c_{n-4}$ be a chain with $c_1$ and $c_{n-4}$ both lines and such that $a_1^1\,\vert\, c_1$, $a_7^{n-2}\,\vert\, c_{n-4}$ (so for $n=5$ we have in particular that $a_1^1\,\vert\, c_1=c_{n-4}\,\vert\, a_7^{n-2}$). We draw this configuration below.
	
	\begin{figure}[h]
		\centering
		\begin{tikzpicture}[scale=0.55]
		
		\node[circle, draw, fill=white, inner sep=1.5pt] (roota) at (9,10) {};
		\node (rootc) at (10,10) {$\dots$};
		\node[circle, draw, fill=white, inner sep=1.5pt] (rootb) at (11,10) {};
		\draw (roota) -- (rootc);
		\draw (rootb) -- (rootc);

		\node[draw=blue, rounded corners, fit=(roota)(rootb), inner sep=3pt, label=above:{$c_1,\dots,c_{n-4}$}] (rectRoot) {};

		\foreach \i in {0,2,6,8,12,14,18,20} {
			\node[circle, draw, fill=black, inner sep=1.5pt] (m\i) at (\i*1,5) {};
		}
	\foreach \i in {3,5,9,11,15,17} {
		\node[circle, draw, fill=white, inner sep=1.5pt] (m\i) at (\i*1,5) {};
	}
		\foreach \i in {1,4,7,10,13,16,19} {
			\node (m\i) at (\i*1,5) {$ \dots $};
		}

		\foreach \i in {0,...,19} {
			\draw (m\i) -- (m\the\numexpr\i+1\relax);
		}

		\foreach \i in {0,2,4,6} {
			\pgfmathtruncatemacro{\start}{3*\i}
			\pgfmathtruncatemacro{\end}{3*\i+2}
			\node[draw=blue, rounded corners, fit=(m\start)(m\end), inner sep=3pt, label=below:{$a^1_{\the\numexpr\i+1\relax},\dots, a^{n-2}_{\the\numexpr\i+1\relax}$}] (rectM\i) {};
		}
		
		\foreach \i in {1,3,5} {
			\pgfmathtruncatemacro{\start}{3*\i}
			\pgfmathtruncatemacro{\end}{3*\i+2}
			\node[draw=blue, rounded corners, fit=(m\start)(m\end), inner sep=3pt, label=above:{$a^1_{\the\numexpr\i+1\relax},\dots, a^{n-2}_{\the\numexpr\i+1\relax}$}] (rectM\i) {};
		}

		\foreach \i in {0,2} {
			\node[circle, draw, fill=white, inner sep=1.5pt] (c\i a) at (3+\i*4,0) {};
			\node[circle, draw, fill=black, inner sep=1.5pt] (c\i b) at (3+\i*4+2,0) {};
			\node (c\i c) at (3+\i*4+1,0) {$\dots$};
			
			\draw (c\i a) -- (c\i c);
			\draw (c\i b) -- (c\i c);

			\node[draw=blue, rounded corners, fit=(c\i a)(c\i b), inner sep=3pt, label=below:{$b^1_{\the\numexpr\i+1\relax},\dots,b^{n-3}_{\the\numexpr\i+1\relax}$}] (rectB\i) {};	}
		
\foreach \i in {1,3} {
	\node[circle, draw, fill=black, inner sep=1.5pt] (c\i a) at (3+\i*4,0) {};
	\node[circle, draw, fill=white, inner sep=1.5pt] (c\i b) at (3+\i*4+2,0) {};
	\node (c\i c) at (3+\i*4+1,0) {$\dots$};
	
	\draw (c\i a) -- (c\i c);
	\draw (c\i b) -- (c\i c);

	\node[draw=blue, rounded corners, fit=(c\i a)(c\i b), inner sep=3pt, label=below:{$b^1_{\the\numexpr\i+1\relax},\dots,b^{n-3}_{\the\numexpr\i+1\relax}$}] (rectB\i) {};	}

		\draw (roota) -- (m0.north);
		\draw (rootb) -- (m20.north);
		
		\draw (m0.south) -- (c0a.north);
		\draw (m3.south) -- (c1a.north);
		\draw (m6.south) -- (c2a.north);
		\draw (m9.south) -- (c3a.north);
		
		\draw (m11.south) -- (c0b.north);
		\draw (m14.south) -- (c1b.north);
		\draw (m17.south) -- (c2b.north);
		\draw (m20.south) -- (c3b.north);
		
		\end{tikzpicture}
	\caption{\label{fig:figure3} The configuration $A$ for $n\geq 5$ odd.}	 
\end{figure}
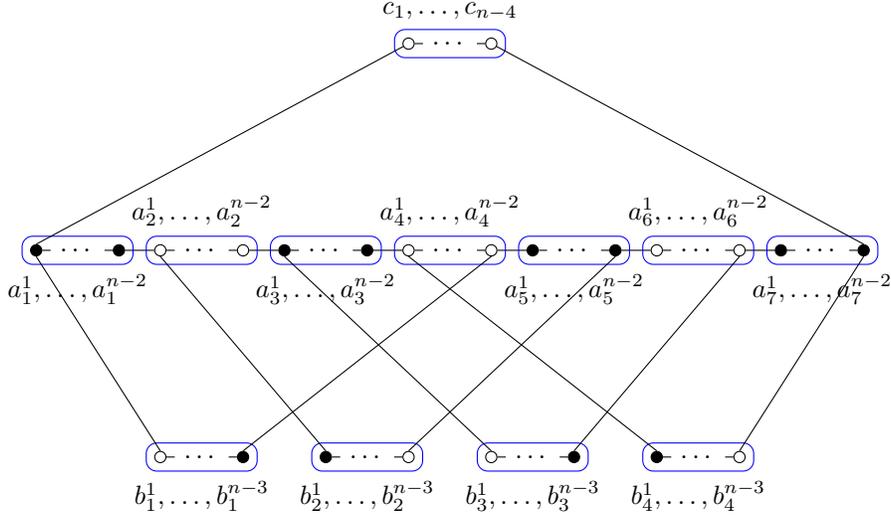
	
	\noindent We first verify that $A$ is confined partial $n$-gon.
	 
	\medskip
	
	\begin{claim}
		$A$ is a partial $n$-gon.
	\end{claim}
	\begin{proof}
		It suffices to verify that $A$ does not contain cycles of length $<2n$. Let $\gamma$ be a cycle in $A$, then we have the following possible configurations.
		
		\smallskip
		\noindent (i) $\gamma$ is a cycle containing $c_1,\dots,c_{n-4}$ and no subchain of the form $b^1_k,\dots,b^{n-3}_k$. In this case $\gamma$ is the cycle containing all elements $a_k^i$ for all $1\leq k\leq 7$ and $1\leq i \leq n-2$. It follows that $\gamma$ has length $(n-4)+7(n-2)=8n-18$. Since for all $n\geq 5$ we have $8n-18\geq 2n$, $\gamma$ is an admissible cycle.
		
		\smallskip
		\noindent (ii) $\gamma$ is a cycle containing $c_1,\dots,c_{n-4}$ and one subchain of the form $b^1_k,\dots,b^{n-3}_k$. By construction of $A$ it follows that $\gamma$ has length $(n-4)+3(n-2)+(n-3)+2=5n-11$. Since for all $n\geq 5$ we have $5n-11\geq 2n$ it follows that $\gamma$ is an admissible cycle.
		
		\smallskip
		\noindent (iii) $\gamma$ is a cycle containing $c_1,\dots,c_{n-4}$ and two subchains of the form $b^1_k,\dots,b^{n-3}_k$. It is straightforward to verify that the shortest cycle of this kind is the cycle
		\[c_1\,\vert\, a^1_1\,\vert\, b_1^1\,\vert\, \dots \,\vert\, b_1^{n-3}\,\vert\, a_4^{n-2}\,\vert\, \dots \,\vert\, a_4^1\,\vert\, b^1_4 \,\vert\, \dots \,\vert\, b_4^{n-3}\,\vert\, a^{n-2}_7 \,\vert\, c_{n-4}\,\vert\, \dots \,\vert\, c_1   \]
		which has length $(n-4)+2(n-3)+(n-2)+2=4n-10$. Since $4n-10\geq 2n$ whenever $n\geq 5$, $\gamma$ is an admissible cycle.
		
		\smallskip
		\noindent (iii) $\gamma$ is a cycle not containing $c_1,\dots,c_{n-4}$ but containing two subchains of the form $b^1_k,\dots,b^{n-3}_k$. Then $\gamma$ contains (at least) two blocks of $n-2$ points and two blocks of $n-3$ points, e.g., $\gamma$ is the chain 	
		\[a^1_2\,\vert\, b_2^1\,\vert\, \dots \,\vert\, b_2^{n-3}\,\vert\, a_5^{n-2}\,\vert\, a_6^1\,\vert\, \dots \,\vert\, a_6^{n-2}\,\vert\, b_3^{n-3}\,\vert\, \dots \,\vert\, b_3^1\,\vert\, a^1_3\,\vert\, a^{n-2}_2\,\vert\, \dots \,\vert\, a^1_2.   \]
		Then $\gamma$ has length $2(n-2)+2(n-3)+2=4n-8$ and we have  for $n\geq 5$ that $4n-10\geq 2n$ always holds. So $\gamma$ is an admissible cycle.
		
		\smallskip
		\noindent (iv) $\gamma$ is a cycle not containing $c_1,\dots,c_{n-4}$ and containing only one subchain of the form $b^1_k,\dots,b^{n-3}_k$. Then it is immediate to verify that $\gamma$ has length $4(n-2)+(n-3)=5n-11$. Since $5n-11\geq 2n$ for all $n\geq 5$ we have that $\gamma$ is admissible.
		
		\smallskip
		\noindent Since all cycles in $A$ are of one of the forms (i)-(iv) above, it follows that $A$ does not contain cycles of length $<2n$, and thus that it is a partial $n$-gon.
	\end{proof} 
	
	\begin{claim}
		$A$ is confined.
	\end{claim} 
\begin{proof}
	 Firstly, notice that by construction every element in $A$ has valency at least 2. Moreover, every chain of $n-2$ elements contains at least one element with valency $\geq 3$. It follows directly from Definition~\ref{ngons:def_hyperfree} that $A$ does not contain any hyperfree tuple (recall also Remark~\ref{remark:confined},). We conclude that $A$ is confined.
\end{proof}

We finally define a configuration $B$ satisfying Lemma~\ref{the_crucial_lemma}. Let $A_1$ and $A_2$ be two disjoint copies of $A$ and let $c^1_1,\dots, c^1_{n-4}$, $c^2_1,\dots, c^2_{n-4}$ be the two copies of $c_1,\dots,c_4$ from $A_1$ and $A_2$, respectively. We let $B=A_1\cup A_2\cup \{d_1,\dots,d_{n-4}\}$ where $d_1,\dots,d_{n-4}$ is a chain with $d_1\,\vert \dots\,\vert d_{n-4}$ and such that $d_1$ and $d_{n-4}$ are two lines incident respectively to $c_{n-4}^1$ and $c_{n-4}^2$.

	\medskip
	\begin{claim}
		$B$ is a finite, confined, non-degenerate partial $n$-gon containing $A_1 \cup A_2$ and such that the only copies of $A$ in $B$ are $A_1$ and  $A_2$.
	\end{claim}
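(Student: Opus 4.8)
The plan is to check the several assertions hidden in the claim: that $B$ is finite, a partial $n$-gon, confined, non-degenerate, contains $A_1\cup A_2$, and contains no copy of $A$ besides $A_1$ and $A_2$. Containment and finiteness are immediate from the definition of $B$. For the partial $n$-gon property, note that $A_1$ and $A_2$ are disjoint, each is a partial $n$-gon by the two previous claims, and $B$ is obtained from $A_1\sqcup A_2$ by adjoining the chain $d_1,\dots,d_{n-4}$, which is linked to the rest only through the single incidences $d_1\,\vert\,c^1_{n-4}$ and $d_{n-4}\,\vert\,c^2_{n-4}$ (since $n$ is odd and $c_{n-4}$ is a line, this forces $d_1,d_{n-4}$ to be points and gives the chain the right alternation of sorts). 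Adjoining a path between two distinct connected components creates no new cycle, so every cycle of $B$ lies inside $A_1$ or inside $A_2$, hence has length $\geq 2n$; thus $B$ is a partial $n$-gon.

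For non-degeneracy, $B$ is connected, so by Remark~\ref{non-degeneracy} it suffices to exhibit in $B$ a $(2n+2)$-cycle or two elements at distance $\geq n+2$: when $n=5$ the cycle of length $4n-8=2n+2$ through two $b$-chains (exhibited in the proof that $A$ is a partial $n$-gon) lies inside $A_1$, while for $n\geq 7$ the two copies of $a^1_1$ lie at distance $(n-4)+(n-3)+(n-4)=3n-11\geq n+2$, since any path between them must run along a $c$-chain to $c^1_{n-4}$, through the whole arc $d_1,\dots,d_{n-4}$, and back along a $c$-chain from $c^2_{n-4}$. For confinedness I use the criterion in Remark~\ref{remark:confined}: every element of $B$ has valency $\geq 2$ (each $d_i$ has valency exactly $2$, and each element of $A_1\cup A_2$ keeps its valency in $A$ except $c^1_{n-4}$ and $c^2_{n-4}$, whose valency rises from $2$ to $3$), and every chain of $n-2$ elements of $B$ contains an element of valency $\geq 3$: a chain contained in $A_1$ or $A_2$ is handled by confinedness of $A$, and any other chain meets $\{d_1,\dots,d_{n-4}\}$, which has only $n-4<n-2$ elements and whose only $B$-neighbours outside it are $c^1_{n-4},c^2_{n-4}$, so such a chain contains one of these, of valency $3$.

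The heart of the matter is the statement about copies of $A$, and the structural input I would use is that $A$ is $2$-connected. This I prove by an open ear decomposition: the elements $a^i_k$ ($1\le k\le 7$) in chain order, closed up through the $c$-chain, form a cycle of $A$, and each of the four chains $b^1_k,\dots,b^{n-3}_k$ ($1\le k\le 4$) is then added as an open ear joining the two already-present vertices $a^1_k$ and $a^{n-2}_{k+3}$ through new internal vertices; since all of $A$ is reached this way, $A$ is $2$-connected. Now let $\pi\colon A\to B$ be an embedding, so that $\pi(A)$ with the induced incidence is isomorphic to $A$, and suppose toward a contradiction that some $d_i\in\pi(A)$. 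As $A$ is confined, $d_i$ has valency $\geq 2$ in $\pi(A)$, but valency exactly $2$ in $B$, so both of its $B$-neighbours lie in $\pi(A)$. Deleting $d_i$ from $B$ leaves a component containing $A_1$ (together with $d_1,\dots,d_{i-1}$) and a component containing $A_2$ (together with $d_{i+1},\dots,d_{n-4}$), with one $B$-neighbour of $d_i$ in each; hence $\pi(A)\setminus\{d_i\}$ meets both of these with no edge of $\pi(A)$ between them, making $d_i$ a cut vertex of $\pi(A)$ — impossible since $\pi(A)\cong A$ is $2$-connected. Therefore $\pi(A)$ contains no $d_i$, so $\pi(A)\subseteq A_1\sqcup A_2$; being connected, $\pi(A)$ lies in $A_1$ or in $A_2$, and since $|\pi(A)|=|A|=|A_1|=|A_2|$ we conclude $\pi(A)=A_1$ or $\pi(A)=A_2$. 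This also completes the verification of Lemma~\ref{the_crucial_lemma} for $n\geq 5$ odd.

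I expect the main obstacle to be the verification that $A$ is $2$-connected, together with the precise bookkeeping showing that deleting a single $d_i$ disconnects $B$ exactly as claimed; both require unwinding the concrete definition of the configuration, but once they are in place the cut-vertex argument closes the proof at once. (An alternative in the spirit of the $n=3$ case would be to show that $d_i\in\pi(A)$ propagates the valency constraints from $c^1_{n-4}$ and $c^2_{n-4}$ into $A_1$ and $A_2$ and forces $|\pi(A)|>|A|$; the $2$-connectedness route seems cleaner here.)
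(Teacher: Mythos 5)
Your proof is correct and follows essentially the same route as the paper's: both rest on the two observations that the chain $d_1,\dots,d_{n-4}$ separates $A_1$ from $A_2$ in $B$ while $A$ is $2$-connected (the paper phrases the latter as ``every two elements of $A$ are joined by two disjoint paths'' and excludes the $d_i$ from $\pi(A)$ by a valency count rather than your cut-vertex argument, but these are the same idea). If anything, your write-up is slightly more complete than the paper's, since the explicit ear decomposition supplies a verification of $2$-connectedness that the paper leaves implicit, and your non-degeneracy estimate (splitting $n=5$ from $n\geq 7$) is a correct variant of the paper's distance bound on the two copies of $a^{n-2}_1$.
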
 
	\begin{proof}
		 Firstly, since every path from one element of $A_1$ to one element to $A_2$ must go through $d_1$, it follows that every cycle in $B$ must already be contained in $A_1$ or $A_2$. Thus $B$ does not contain cycles of length $<2n$ and is also a partial $n$-gon. Since the elements in $d_1,\dots d_{n-4}$ all have two incidences and they also do not belong to any clean arc (given that the elements $c_{n-4}^1$ and $c_{n-4}^2$ have valency 3) we also have that $B$ is confined. Moreover, since the two copies of the element $a^{n-2}_1$ from $A_1,A_2$ have distance $\geq n+2$ for all $n\geq 5$, it follows from Remark~\ref{non-degeneracy} that $B$ is non-degenerate.  Finally, to see that $B$ does not contain any other copy of $A$ different from $A_1$ and $A_2$, we notice the following: in $A$ every two elements are connected by two disjoint paths, while in $B$ every path from some $x\in A_1$ and $y\in A_2$ must necessarily pass through $d_1$. It follows that an embedding $\pi:A\to B$ cannot contain elements both in $A_1$ and $A_2$. Also, $\pi(A)$ cannot contain any element from $\{d_1,\dots, d_{n-4}\}$, since every element in $A$ has valency $\geq 2$, but every $D\subseteq \{ d_1,\dots, d_{n-4} \}$ contains at least one element of valency 1 in $A_i\cup D$ for $i\in\{1,2\}$. Thus $\pi(A)=A_1$ or $\pi(A)=A_2$, proving our claim and concluding our construction.
	\end{proof}
\end{construction}

\begin{construction}[$n=2m$ for $m\geq 3$]\label{even_case} It remains to consider the even case for $n\geq 6$. We provide a construction very similar to the one from Construction~\ref{odd_case}, but we make some key adjustments due to the different parity of the clean arcs in this setting. Fix $n=2m$ for some $m\geq 3$, we define a partial confined $n$-gon $A$ as follows. For each $1\leq k\leq 5 $ we let $a_k^1\,\vert\,\dots \,\vert\, a_k^{n-2}$ be a chain such that $a_k^1$ is a point and $a_k^{n-2}$ is a line (this is possible because $n$ is even). Additionally, we let $a_k^{n-2}\,\vert\, a_{k+1}^{1}$ for all $1\leq k\leq 4$. For $1\leq k\leq 3$ we let $b_k^1\,\vert\,\dots\,\vert\,b_k^{n-3}$ be a chain with both $b_k^1$ and $b_k^{n-3}$ lines. Additionally, we let $a_1^1\,\vert\, b_1^1$, $a_2^1\,\vert\, b_2^1$, $a_3^1\,\vert\, b_3^1$, and $a_3^{n-3}\,\vert\, b_1^{n-3}$, $a_4^{n-3}\,\vert\, b_2^{n-3}$, $a_5^{n-3}\,\vert\, b_3^{n-3}$. Finally, we let $c_1\,\vert\,\dots\,\vert\, c_{n-4}$ be a chain with $c_1$ a line and $c_{n-4}$ a point such that $a_1^1\,\vert\, c_1$, $a_5^{n-2}\,\vert\, c_{n-4}$. We draw this configuration below.
	
	\begin{figure}[h]
		\centering
		\begin{tikzpicture}[scale=0.55]
		
		\node[circle, draw, fill=white, inner sep=1.5pt] (roota) at (8,10) {};
		\node (rootc) at (9.5,10) {$\dots$};
		\node[circle, draw, fill=black, inner sep=1.5pt] (rootb) at (11,10) {};
		\draw (roota) -- (rootc);
		\draw (rootb) -- (rootc);
		
		\node[draw=blue, rounded corners, fit=(roota)(rootb), inner sep=3pt, label=above:{$c_1,\dots,c_{n-4}$}] (rectRoot) {};
		
		\foreach \i in {0,2,4,6,8,10,12,14,16,18} {
			\node[circle, draw, fill=black, inner sep=1.5pt] (m\i) at (\i*1,5) {};
		}
	\foreach \i in {3,7,11,15,19} {
		\node[circle, draw, fill=white, inner sep=1.5pt] (m\i) at (\i*1,5) {};
	}
		\foreach \i in {1,5,9,13,17} {
			\node (m\i) at (\i*1,5) {$ \dots $};
		}
		
		\foreach \i in {0,...,18} {
			\draw (m\i) -- (m\the\numexpr\i+1\relax);
		}
		
		\foreach \i in {0,2,4} {
			\pgfmathtruncatemacro{\start}{4*\i}
			\pgfmathtruncatemacro{\end}{4*\i+3}
			\node[draw=blue, rounded corners, fit=(m\start)(m\end), inner sep=3pt, label=above:{$a^1_{\the\numexpr\i+1\relax},\dots, a^{n-2}_{\the\numexpr\i+1\relax}$}] (rectM\i) {};
		}
		
		\foreach \i in {1,3} {
			\pgfmathtruncatemacro{\start}{4*\i}
			\pgfmathtruncatemacro{\end}{4*\i+3}
			\node[draw=blue, rounded corners, fit=(m\start)(m\end), inner sep=3pt, label=below:{$a^1_{\the\numexpr\i+1\relax},\dots, a^{n-2}_{\the\numexpr\i+1\relax}$}] (rectM\i) {};
		}
		
		\foreach \i in {0,...,2} {
			\node[circle, draw, fill=white, inner sep=1.5pt] (c\i a) at (3+\i*5,0) {};
			\node[circle, draw, fill=white, inner sep=1.5pt] (c\i b) at (3+\i*5+3,0) {};
			\node (c\i c) at (3+\i*5+1.5,0) {$\dots$};
			\draw (c\i a) -- (c\i c);
			\draw (c\i b) -- (c\i c);
			
			\node[draw=blue, rounded corners, fit=(c\i a)(c\i b), inner sep=3pt, label=below:{$b^1_{\the\numexpr\i+1\relax},\dots,b^{n-3}_{\the\numexpr\i+1\relax}$}] (rectB\i) {};
		}
		
		\draw (roota) -- (m0.north);
		\draw (rootb) -- (m19.north);
		
		\draw (m0.south) -- (c0a.north);
		\draw (m4.south) -- (c1a.north);
		\draw (m8.south) -- (c2a.north);
		
		\draw (m10.south) -- (c0b.north);
		\draw (m14.south) -- (c1b.north);
		\draw (m18.south) -- (c2b.north);		
		\end{tikzpicture}
		\caption{\label{fig:figure4} The configuration $A$ for $n\geq 6$ even.}
	\end{figure}
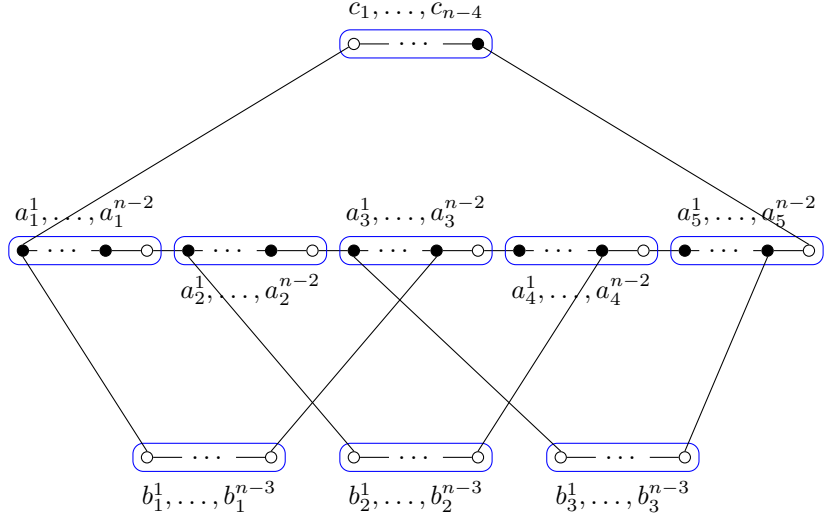

	\noindent We verify that $A$ is confined partial $n$-gon.
	
	\medskip
	
	\begin{claim}
		$A$ is a partial $n$-gon.
	\end{claim}
	\begin{proof}
		It suffices to verify that $A$ does not contain cycles of length $<2n$. Let $\gamma$ be a cycle in $A$, then we have the following possible configurations.
		
		\smallskip
		\noindent (i) $\gamma$ is a cycle containing $c_1,\dots,c_{n-4}$ and no subchain of the form $b^1_k,\dots,b^{n-3}_k$. In this case $\gamma$ must be the cycle containing all elements $a_k^i$ for all $1\leq k\leq 5$ and $1\leq i \leq n-2$. It follows that $\gamma$ has length $(n-4)+5(n-2)=6n-14$. Since for all $n\geq 5$ we have $6n-14\geq 2n$, $\gamma$ is an admissible cycle.
		
		\smallskip
		\noindent (ii) $\gamma$ is a cycle containing $c_1,\dots,c_{n-4}$ and only one subchain of the form $b^1_k,\dots,b^{n-3}_k$. By construction one can verify that $\gamma$ has length $(n-4)+(n-3)+2(n-2)+3=4n-8$ and for all $n\geq 5$ we have $4n-8\geq 2n$, thus $\gamma$ is an admissible cycle.
		
		\smallskip
		\noindent (iii) $\gamma$ is a cycle containing $c_1,\dots,c_{n-4}$ and two subchains of the form $b^1_k,\dots,b^{n-3}_k$. It is then straightforward to verify that the shortest cycle of this kind is the cycle
		\[c_1\,\vert\, a^1_1\,\vert\, b_1^1\,\vert\, \dots \,\vert\, b_1^{n-3}\,\vert\, a_3^{n-3}\,\vert\, \dots \,\vert\, a_3^1\,\vert\, b^1_3 \,\vert\, \dots \,\vert\,  b_3^{n-3}\,\vert\, a^{n-3}_5 \,\vert\, a^{n-2}_5 \,\vert\, c_{n-4}\,\vert\, \dots \,\vert\, c_1   \]
		which has length $(n-4)+3(n-3)+3=4n-10$. Since $4n-10\geq 2n$ whenever $n\geq 5$, $\gamma$ is an admissible cycle.
		
		\smallskip
		\noindent (iii) $\gamma$ is a cycle not containing $c_1,\dots,c_{n-4}$ but containing two subchains of the form $b^1_k,\dots,b^{n-3}_k$. Then $\gamma$ contains (at least) two blocks of $n-2$ points and two blocks of $n-3$ points, e.g., $\gamma$ is a cycle of the form	
		\[a^1_2\,\vert\, b_2^1\,\vert\, \dots \,\vert\, b_2^{n-3}\,\vert\, a_4^{n-3}\,\vert\, a_4^{n-2}\,\vert\, a_5^1 \,\vert\, \dots \,\vert\, a_5^{n-3}\,\vert\, b_3^{n-3}\,\vert\, \dots \,\vert\, b_3^1\,\vert\, a^1_3\,\vert\, \dots \,\vert\, a^1_2.   \]
		Then $\gamma$ has length greater than $2(n-2)+2(n-3)=4n-10$ and we have that $4n-10\geq 2n$ exactly when $n\geq 5$. So $\gamma$ is an admissible cycle.
		
		\smallskip
		\noindent (iv) $\gamma$ is a cycle not containing $c_1,\dots,c_{n-4}$ and containing one subchains of the form $b^1_k,\dots,b^{n-3}_k$. It is immediate to check that $\gamma$ has length at least $2(n-2)+2(n-3)=4n-10$. Since $4n-10\geq 2n$ when $n\geq 5$ we have that $\gamma$ is admissible.
		
		\smallskip
		\noindent Since all cycles in $A$ are of one of the forms (i)-(iv) above, it follows that $A$ does not contain cycles of length $<2n$, and thus it is a partial $n$-gon.
	\end{proof}
	
	\begin{claim}
		$A$ is confined.
	\end{claim}
	\begin{proof}
		By construction, every element in $A$ has valency at least 2, and every chain of $n-2$ elements in $A$ contains at least one element with valency $\geq 3$. As in Remark~\ref{remark:confined} we have that $A$ does not contain any hyperfree tuple and we thus conclude that it is confined.
	\end{proof}
	
	Finally, we define a configuration $B$ satisfying Lemma~\ref{the_crucial_lemma} as follows. Let $A_1$ and $A_2$ be two disjoint copies of $A$ and consider the subchains $c^1_1,\dots, c^1_{n-4}$ and $c^2_1,\dots, c^2_{n-4}$ which are the copies of $c_1,\dots, c_{n-4}$ from $A_1$ and $A_2$, respectively. We let $B=A_1\cup A_2\cup \{d_1,\dots,d_{n-4}\}$ where $d_1,\dots,d_{n-4}$ is a chain with $d_1\,\vert \dots\,\vert d_{n-4}$ and such that $d_1$ a point incident to $c^1_1$ and $d_{n-4}$ a line incident to $c_{n-4}^2$. By reasoning as in Construction~\ref{odd_case} it can be shown that $B$ satisfies Lemma~\ref{the_crucial_lemma}. This completes the proof of Lemma~\ref{the_crucial_lemma}.
\end{construction}

\subsection{The Borel reduction}\label{Sec:borel_reduction_ngon}

We use the configuration introduced in the previous section to define a Borel reduction from the Borel space of countable trees (known to be Borel complete) to the Borel space of countable generalised $n$-gons, for all $3\leq n <\omega$. By classical results from descriptive set theory, also the class of countable trees without endpoints is known to be Borel complete. Since trees without endpoints are essentially \emph{firm} $\infty$-gons (namely, $\infty$-gons with each element having valency $\geq 2$), the end result of this section is that \emph{all} families of generalised $n$-gons for $n\in[3,\infty]$ are Borel complete. On the other hand, it is easy to see that the theory of infinite bipartite graphs is $\aleph_0$-categorical, whence countable 2-gons are obviously not Borel complete, and they have in fact the simplest possible isomorphism relation.  We recall the Borel completeness of countable trees, originally proved in \cite{friedman}, and we then define a Borel reduction from countable trees to countable generalised $n$-gons, for all $3\leq n <\omega$. Recalling Definition~\ref{def:graphs}, we always consider trees as connected and acyclic bipartite graphs.

\begin{fact}[{\cite[Thm. 1.1.1]{friedman}},  {\cite[Cor.~13.2.2; Ex.~13.2.2]{gao}}]\label{completeness:trees} Countable trees are Borel complete. Moreover, also countable trees with no endpoints are Borel complete.
\end{fact}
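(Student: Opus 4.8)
The plan is to recall that this is a classical theorem of Friedman \cite{friedman} and to indicate the shape of the argument rather than to reprove it. As recalled in the introduction, isomorphism on the space of countable graphs (equivalently, on the space of countable structures in any fixed countable relational language) is the most complex isomorphism relation between countable structures; thus, in the terminology of this paper, countable graphs are themselves Borel complete. It therefore suffices to produce a Borel map $G\mapsto T_G$ from the space of countable graphs with domain $\omega$ to the space of countable trees with domain $\omega$ with $G\cong H\iff T_G\cong T_H$, and then to upgrade it so that every $T_G$ has no endpoints.

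For the coding one builds $T_G$, from $G=(\omega,E)$, as a tree of bounded height with a root $\rho$ carrying a fixed rigid pendant gadget that makes $\rho$ the unique vertex of its isomorphism type; thus any isomorphism $T_G\to T_H$ fixes the root and so respects the induced notion of level. Below $\rho$ one hangs subtrees coding the vertices of $G$ and subtrees coding the edges of $G$, arranged so that any tree isomorphism $T_G\to T_H$ is forced to carry vertex-codes to vertex-codes and edge-codes to edge-codes, to induce from the first matching a bijection $\pi\colon\omega\to\omega$, and to make the matching of edge-codes express exactly that $\{m,n\}\in E(G)\iff\{\pi m,\pi n\}\in E(H)$; conversely an isomorphism $G\cong H$ lifts to an isomorphism $T_G\cong T_H$ by transporting the codes. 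The crucial design point is that adjacency must be encoded \emph{without} attaching globally distinct rigid name tags to the vertex-codes: such tags would force $T_G$ to remember the chosen enumeration of $V(G)$ rather than only the isomorphism type of $G$, turning $G\mapsto T_G$ into a reduction of \emph{equality} rather than of isomorphism. The map $G\mapsto T_G$ is continuous, hence Borel, because each node and each edge of $T_G$ is included on the basis of only a finite amount of information about $G$.

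For the ``moreover'' part one must arrange that the coding tree has no vertex of valency $1$. One option is to modify the construction directly: pad the skeleton so that every node has valency $\geq 2$ and replace each pendant leaf by a pendant copy of a fixed rigid tree without endpoints of large minimal valency --- for instance the tree built from a ray $r_0,r_1,r_2,\dots$ by attaching $k+4$ extra pendant rays at $r_k$, which is rigid, has no endpoints, and cannot be confused with the coding skeleton --- and then re-verify the reduction property. A cheaper option is to compose $G\mapsto T_G$ with a standard Borel reduction from countable trees to countable trees without endpoints.

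The step I expect to be the real obstacle is the combinatorial bookkeeping behind the item about edge-codes above: the code has to be simultaneously \emph{faithful}, i.e.\ rigid enough that the isomorphism type of $T_G$ recovers that of $G$, and \emph{functorial}, i.e.\ flexible enough that an arbitrary relabelling of the vertices of $G$ still induces a tree isomorphism. Reconciling these is precisely the content of the Friedman--Stanley coding, and in the endpoint-free version the same tension must be managed while keeping all valencies at least $2$, which is the only genuinely new complication. Since this fact is invoked only as a black box in the sequel, we content ourselves with this sketch and refer to \cite{friedman,gao} for the details.
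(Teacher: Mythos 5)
The paper states this as a \emph{Fact} and gives no proof of its own, relying entirely on the citations to Friedman--Stanley and to Gao; your proposal does essentially the same thing, deferring to those references while sketching the standard graph-to-tree coding and the endpoint-removal step. The sketch is a fair description of the classical argument (and you correctly identify the faithfulness-versus-functoriality tension as the crux), so this matches the paper's treatment.
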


\begin{construction}\label{ngon_construction} Let $3 \leq n < \omega$. Given a tree $\Gamma = (T, E)$, we define a partial $n$-gon $P^-_{(\Gamma, n)}$ as follows:
	\begin{enumerate}[(i)]
	\item we add disjoint copies $A_v$ of $A$ from \ref{the_crucial_lemma}, for every $v \in T$;
	\item for every edge $e\in E$ between $v_1,v_2\in T$ we add a copy $B(e)$ of $B$ from \ref{the_crucial_lemma} where $A_{v_1} \cup A_{v_2}$ play the role of $A_1 \cup A_2$.
	\end{enumerate}
Then we define $P_{(\Gamma, n)}$ as $F(P^-_{(\Gamma, n)})$, i.e., as the free completion of the partial $n$-gon $P^-_{(\Gamma, n)}$ (recall Definition~\ref{def_free_completion}). 

\smallskip \noindent To show that this construction is sound, all that we need to show is that $P^-_{(\Gamma, n)}$ is a partial $n$-gon, as then by Definition~\ref{def_free_completion} it follows that $P_{(\Gamma, n)}$ is a generalised $n$-gon. Now, since $\Gamma$ is a tree, it does not contain any cycle, and thus by construction every cycle in $P^-_{(\Gamma, n)}$ must already be contained in some configuration of the form $B(e)$ for some edge $e\in E$. Since by Lemma~\ref{the_crucial_lemma} we have that $B(e)$ is a partial $n$-gon, it follows that every cycle $\gamma$ in $P^-_{(\Gamma, n)}$ has length at least $2n$ and so that  $P^-_{(\Gamma, n)}$ is also a partial $n$-gon. Therefore, it follows immediately from Definition~\ref{def_free_completion} and Remark~\ref{non-degeneracy} that $P_{(\Gamma, n)}$ is a generalised $n$-gon.   Additionally, if $\Gamma$ has size $\lambda \geq \aleph_0$, then it follows immediately by our construction that $P_{(\Gamma, n)}$ also has size $\lambda$. As a matter of facts, we also have (if $\Gamma$ contains at least one edge, namely  if $|\Gamma|\geq 2$) that since the structure $B$ from Lemma~\ref{the_crucial_lemma} is always non-degenerate, in the free completion of $P^-_{(\Gamma, n)}$ we recursively add new neighbours to every element, and thus we obtain that every element in $P_{(\Gamma, n)}$ has infinite valency (of size $\lambda=|\Gamma|+\aleph_0$).
\end{construction}

\begin{lemma}\label{key_lemma_As}
	Let $3 \leq n < \omega$. For every tree $\Gamma=(T,E)$ we have that every copy of $A$ (from Constructions~\ref{case_n=3}--\ref{even_case}) in $P_{(\Gamma, n)}$ is of the form $A_v$ for some $v\in \Gamma$.
\end{lemma}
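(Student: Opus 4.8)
The goal is to show that any copy $A'$ of $A$ inside $P_{(\Gamma,n)} = F(P^-_{(\Gamma,n)})$ coincides with one of the building blocks $A_v$. The strategy is to combine Lemma~\ref{lemma_confined} with the combinatorial rigidity of $A$ and $B$ established in Lemma~\ref{the_crucial_lemma}. First I would observe that $P^-_{(\Gamma,n)}$ is confined: it is the union of the finite confined pieces $A_v$ and $B(e)$, and the ``gluing'' elements (the points $p,\ell$, the point $c$, or the chains $d_1,\dots,d_{n-4}$, depending on $n$) were chosen precisely so that every element of $P^-_{(\Gamma,n)}$ sits inside some confined finite subset (namely some $B(e)$ or $A_v$). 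Hence $P^-_{(\Gamma,n)}$ is a confined partial $n$-gon, and Lemma~\ref{lemma_confined} applies: any finite confined subset of $F(P^-_{(\Gamma,n)})$ is already contained in $P^-_{(\Gamma,n)}$.

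Next, since $A$ is itself confined (this is part of Lemma~\ref{the_crucial_lemma} and was verified in each of Constructions~\ref{case_n=3}--\ref{even_case}), any copy $A'$ of $A$ in $P_{(\Gamma,n)}$ is a finite confined subset, so by the previous paragraph $A' \subseteq P^-_{(\Gamma,n)}$. It remains to analyze copies of $A$ inside $P^-_{(\Gamma,n)}$ itself, a purely finite combinatorial matter. Here I would argue that $A'$, being connected (every two elements of $A$ are joined by a path), must be contained in a ``connected stretch'' of $P^-_{(\Gamma,n)}$; more precisely, because $\Gamma$ is a tree, the only cycles and the only ``short'' connecting paths in $P^-_{(\Gamma,n)}$ live inside a single piece $B(e)$, and the pieces $B(e)$ overlap only in the $A_v$'s. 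The key point is that a copy of $A$ cannot straddle two different pieces $B(e_1)$ and $B(e_2)$ sharing a vertex-block $A_v$: inside $A$ every pair of elements is connected by \emph{two} (internally) disjoint paths (this was noted in Constructions~\ref{odd_case} and~\ref{even_case}, and is visibly true for $n=3,4$), whereas in $P^-_{(\Gamma,n)}$ any path leaving $A_v$ toward $B(e_1)$ and any path toward $B(e_2)$ must pass through the single ``bottleneck'' vertex block of $A_v$ — in fact through the distinguished gluing elements — so no such two-disjoint-path configuration can span both. Therefore $A' \subseteq B(e)$ for a single edge $e$, and by the defining property of $B$ in Lemma~\ref{the_crucial_lemma} (the only copies of $A$ in $B$ are $A_1$ and $A_2$) we get $A' = A_{v_1}$ or $A' = A_{v_2}$ where $e$ joins $v_1,v_2$.

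\textbf{Main obstacle.} The delicate step is the ``no straddling'' argument: making rigorous that a copy of $A$ cannot be spread across two adjacent $B$-pieces, and more generally cannot be spread across several pieces. This requires carefully exploiting the specific internal structure of $A$ (two internally disjoint paths between any two of its elements, equivalently every element has valency $\geq 2$ inside $A$ and $A$ is $2$-connected in the appropriate bipartite sense) against the tree-like cut structure of $P^-_{(\Gamma,n)}$: deleting the finitely many gluing elements of a piece $B(e)$ disconnects it from the rest, and inside each $A_v$ the gluing elements to distinct incident edges are ``far apart and separated'' in the sense exhibited by the explicit configurations. I would handle this by identifying, for each piece $B(e)$, a cut set of gluing vertices whose removal separates $B(e)\setminus A_{v_i}$ from the rest of $P^-_{(\Gamma,n)}$, checking in each of the four constructions that these cut vertices either have too small valency inside $A$ (so cannot lie in $A'$) or, if they do lie in $A'$, force $A'$ to contain more than $|A|$ elements — exactly the counting contradiction already used for $n=3$ in Construction~\ref{case_n=3}. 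Once the straddling is excluded, everything reduces to Lemma~\ref{the_crucial_lemma} applied to a single $B(e)$.
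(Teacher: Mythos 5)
Your proposal follows essentially the same route as the paper: reduce to $P^-_{(\Gamma,n)}$ via confinedness of copies of $A$ and Lemma~\ref{lemma_confined}, then use the fact that any two elements of $A$ lie on a common cycle (equivalently, are joined by two internally disjoint paths) against the tree-like cut structure of $P^-_{(\Gamma,n)}$ to force any copy of $A$ into a single $B(e)$, and finish with Lemma~\ref{the_crucial_lemma}. For $n\geq 4$ this is exactly the paper's argument, and you are in fact more explicit than the paper about the preliminary step that $P^-_{(\Gamma,n)}$ is confined so that Lemma~\ref{lemma_confined} applies.

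The one place where your sketch is thinner than it needs to be is $n=3$. There the gluing elements $p^e,\ell^e$ have valency $3$ in $P^-_{(\Gamma,3)}$ (the same as every element of the Heawood graph), and $B(e)$ contains a short cycle through $p^e,\ell^e$ joining $A_{v_1}$ to $A_{v_2}$; moreover, when several tree edges $e_1,e_2,e_3$ meet at a vertex $v$, there are $6$-cycles in $P^-_{(\Gamma,3)}$ passing through gluing elements of \emph{distinct} pieces $B(e_i)$. So neither the valency dichotomy nor the ``cycles live in a single $B(e)$'' principle applies, and your catch-all ``force $A'$ to contain more than $|A|$ elements'' does not obviously dispose of the configuration in which $\pi(A)$ picks up $p^{e_1},p^{e_2},p^{e_3}$ all incident to $a_1^v$: a naive count of the forced elements gives exactly $14=|A|$, not more. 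The paper closes this case by a structural observation instead (one would obtain three internally disjoint paths of length $3$ between $a_1^v$ and $a_4^v$, whereas the Heawood graph has only two such paths between any two elements); alternatively one can push the count one step further (the forced elements $a_1^{u_i}$ would each still need two more neighbours, overshooting $14$). Your plan is completable, but this $n=3$ case analysis is a genuinely needed extra argument that your sketch does not yet contain.
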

\begin{proof}
	By Lemma~\ref{the_crucial_lemma} it suffices to show that any embedding $\pi(A)$ of $A$ into $P_{(\Gamma, n)}$ is already contained in a configuration of the form $B(e)$. We prove this claim by distinguishing the case $n=3$ and all the remaining cases $n\geq 4$. For any $v\in \Gamma$ we write $A_v$ for the corresponding copy of $A$ in $P_{(\Gamma, n)}$ (as in \ref{ngon_construction}), and we write $p^e,\ell^e,d^e,\dots$ for the  elements adjoined in $B(e)$ following Constructions~\ref{case_n=3}--\ref{even_case}.  The reader is advised to keep in mind the configurations and the drawings from  Section~\ref{ngon:finitary_construction}.
	
	\smallskip
	\noindent For the case $n=3$ the claim follows by reasoning similarly as in the proof from Construction~\ref{case_n=3} that the only copies of $A$ in $B$ are $A_1$ and $A_2$. Suppose in fact that $\pi(A)$ is a copy of $A$ in $P^-_{(\Gamma, 3)}$ which is not fully contained in some configuration $B(e)$. Then, since every two elements in $A$ are joined by a path, it follows immediately that $\pi(A)$ must contain the point $p^{e_1}$ or the line $\ell^{e_1}$ for some edge $e_1\in E$. We assume without loss of generality that $p^{e_1}\in \pi(A)$. Then, since this element must have valency 3 in $\pi(A)$, it follows that $\ell^{e_1},a^v_1,a^u_1$ must also be in $\pi(A)$, where $v,u\in T$ are the vertices of the edge $e_1$. Since $a^v_1\in \pi(A)$ and $a_1\in A$ has valency 3, it follows that $\pi(A)$ contains exactly two other neighbours of $a^v_1$. If $\pi(A)$ contains $a^v_2,a^v_{10}$ or $a^v_{14}$ then we can reason exactly as in Construction~\ref{case_n=3} and obtain a contradiction. Suppose thus there are two more edges $e_{2}$ and $e_{3}$ in $\Gamma$ with $v$ as one of their vertices, and that $p^{e_2},p^{e_3}\in \pi(A)$, and by construction $p^{e_2}\,\vert\,a^v_1$,  $p^{e_3}\,\vert\, a^v_1$. By the same argument as before, $p^{e_2},p^{e_3}$ must also have valency 3 in $\pi(A)$, which forces in particular $\ell^{e_2},\ell^{e_3}\in \pi(A)$, where by construction $\ell^{e_2}\,\vert\,a^v_4$,  $\ell^{e_3}\,\vert\, a^v_4$. Since also $\ell^{e_2}$ and $\ell^{e_3}$ must have valency 3 in $\pi(A)$, and since $\Gamma$ is a tree, it follows that $a^v_4\in \pi(A)$. But then we obtain that in $\pi(A)$ the elements $a^v_1$ and $a^v_4$ belong to three different paths of length 3, i.e., the path $a^v_1\;\vert p^{e_1} \;\vert   \ell^{e_1} \;\vert  \; a^v_4$, the path $a^v_1\;\vert p^{e_2} \;\vert   \ell^{e_2} \;\vert  \; a^v_4$ and also the path $a^v_1\;\vert p^{e_3} \;\vert   \ell^{e_3} \;\vert  \; a^v_4$. Differently, in $A$ there are only two paths of length 3 between any two elements. Hence $\pi(A)$ cannot contain elements from different copies of $B$ in $P^-_{(\Gamma, 3)}$, and we thus conclude from Lemma~\ref{the_crucial_lemma} that every copy of $A$ in $P_{(\Gamma, 3)}$ is of the form $A_v$ for some $v\in \Gamma$.
	
\smallskip 
\noindent  For all remaining cases with $n\geq 4$ it suffices to notice the following. In Construction~\ref{case_n=4}, Construction~\ref{odd_case} and Construction~\ref{even_case}, the configuration $B$ is obtained so that every path between some elements from $A_1$ and some element from $A_2$ must contain all elements in $B\setminus (A_1\cup A_2)$. Therefore, since $\Gamma$ is a tree, it follows from Construction~\ref{ngon:finitary_construction} that for any two distinct edges $e_1,e_2$ from $\Gamma$, any two elements $x\in B(e_1)$ and $y\in B(e_2)$ belong to some common cycle in $P^-_{(\Gamma, n)}$ if and only if $B(e_1)\cap B(e_2)=A_v$ and $x,y\in A_v$ for some vertex $v\in \Gamma$. Now, since every two elements from $A$ belong to a common cycle in $A$ (for all $n\geq 4$), it follows that every copy of $A$ in $P^-_{(\Gamma, n)}$ must be already contained in some configuration $B(e)$ for some edge $e$ from $\Gamma$. We conclude that for any $n\geq 3$ every copy of $A$ must be already contained in some $B(e)$ and so, by Lemma~\ref{the_crucial_lemma}, that every copy of $A$ in $P_{(\Gamma, n)}$ is of the form $A_v$ for some $v\in \Gamma$.
\end{proof}

We next show that the function mapping a tree $\Gamma$ to the generalised $n$-gon $ P_{(\Gamma, n)} $ preserves isomorphism types and their negations. Additionally, we also prove that $\Gamma$ is first-order interpretable in $P_{(\Gamma, n)}$, although this will not be needed in the proof of Borel completeness.

\begin{proposition}\label{lemma_interpre} Let $3 \leq n < \omega$. For every two  trees $\Gamma_1$ and $\Gamma_2$ we have that $\Gamma_1 \cong \Gamma_2$ if and only if $P_{(\Gamma_1, n)} \cong P_{(\Gamma_2, n)}$. Additionally, for every tree $\Gamma$ we have that $\Gamma$ is first-order interpretable in $P_{(\Gamma, n)}$.
\end{proposition}
	\begin{proof}
	If  $\Gamma_1,\Gamma_2$ are two trees such that $\Gamma_1\cong \Gamma_2$, then it follows immediately by Construction~\ref{ngon_construction} that  $P^-_{(\Gamma_1, n)} \cong P^-_{(\Gamma_2, n)}$, and thus also that $P_{(\Gamma_1, n)} \cong P_{(\Gamma_2, n)}$. Conversely, suppose that $f:P_{(\Gamma_1, n)} \cong P_{(\Gamma_2, n)}$ is an isomorphism. Since $P^-_{(\Gamma_1, n)}$ is confined, it follows that $f(P^-_{(\Gamma_1, n)})$ is also confined and, by Lemma~\ref{lemma_confined}, we must have that $f(P^-_{(\Gamma_1, n)})\subseteq P^-_{(\Gamma_2, n)}$. By the same argument applied to $f^{-1}$, we similarly obtain that $f^{-1}(P^-_{(\Gamma_2, n)})\subseteq P^-_{(\Gamma_1, n)}$, and thus we conclude that $f:P^-_{(\Gamma_1, n)}\cong P^-_{(\Gamma_2, n)}$. Now, by Lemma~\ref{key_lemma_As}, every copy of $A$ in the partial $n$-gon $P^-_{(\Gamma, n)}$  must be of the form $A_v$ for some $v\in \Gamma$. It follows that every copy $A_{v_1}$ of $A$ in $P^-_{(\Gamma_1, n)}$ with $v_1\in \Gamma_1$ must be sent by $f$ to a copy $A_{v_2}$ of $A$ in $P^-_{(\Gamma_2, n)}$, for some $v_2\in \Gamma_2$. It follows that two configurations $A_{v_1}$ and $A_{w_1}$ belong to a configuration $B(e)$ in $P^-_{(\Gamma_1, n)}$, with $v_1,w_1$ vertices in $\Gamma_1$ and $e$ an edge in $\Gamma_1$, whenever the corresponding configurations $f(A_{v_1})=A_{v_2}$, $f(A_{w_2})=A_{w_2}$ in $P^-_{(\Gamma_2, n)}$ belong to an analogous configuration $B(e')$ for some edge $e'$ from $\Gamma_2$. It follows that $f$ induces a graph isomorphism $\hat{f}$ between the underlying trees $\Gamma_1$ and $\Gamma_2$, witnessing that $\Gamma_1\cong \Gamma_2$.

	\smallskip \noindent We next observe that $\Gamma$ is first-order interpretable in $P_{(\Gamma,n)}$. Since the constructions from Section~\ref{ngon:finitary_construction} are finitary, there is a first-order formula $\phi_n$ in finitely-many variables that specifies the atomic diagram of $A$ from Lemma~\ref{the_crucial_lemma}. We use this formula to define a first-order interpretation $\Psi$ from a tree $\Gamma$ into $P_{(\Gamma, n)}$. Denote by $\phi_n(P_{(\Gamma, n)})$ the subset of tuples in $P_{(\Gamma, n)}$ satisfying the formula $\phi_n$; this means essentially that a tuple $\bar{a}\in P_{(\Gamma, n)}$ has the isomorphism type of $A$ (and in particular, by Lemma~\ref{the_crucial_lemma}, that it is a confined configuration). Therefore, it follows from Lemma~\ref{lemma_confined}, that if $\bar{a} \in \phi_n(P_{(\Gamma, n)})$, then $\bar{a}\in P^-_{(\Gamma, n)}$. Then, since $\bar{a}$ has the isomorphism type of $A$, it  follows again from Lemma~\ref{key_lemma_As} that $\bar{a}$ is exactly of the form $A_v$ for some $v\in \Gamma$. We can thus define the interpretation $\Psi: \phi_n(P_{(\Gamma, n)})\to \Gamma$ by letting $\Psi(\bar{a})=v$, where $v$ is the unique vertex of $\Gamma$ such that $\bar{a}$ is an enumeration of $A_v$. By construction of $P_{(\Gamma, n)}$ this map is clearly surjective. Now, since $B$ is finite, it can also be described by a first-order formula $\psi_n$, and we thus have that there is an edge $e$ between $ \Psi(\bar{a})$ and $\Psi(\bar{b})$ if and only if $P^-_{(\Gamma, n)}\models \psi_n(\bar{a},\bar{b})$. It follows that $\Psi$ is a first-order interpretation of $\Gamma$ in $P_{(\Gamma, n)}$.
	\end{proof}

	\begin{theorem} Let $3 \leq n < \omega$, then the Borel space of generalised $n$-gons with domain~$\omega$ is Borel complete.
\end{theorem}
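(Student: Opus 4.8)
The plan is to show that the map $\Gamma\mapsto P_{(\Gamma,n)}$ built in Construction~\ref{ngon_construction} is a Borel reduction from isomorphism of countable trees to isomorphism of countable generalised $n$-gons, and then to invoke the Borel completeness of trees recorded in Fact~\ref{completeness:trees}. Since, by that same Fact, countable trees with no endpoints are already Borel complete, I would take the source to be this smaller class. The reason for restricting to trees with no endpoints is that these are automatically infinite, so that (by the final paragraph of Construction~\ref{ngon_construction}, which applies as soon as $\Gamma$ has an edge) the polygon $P_{(\Gamma,n)}$ is always countably infinite; moreover the recursion producing it yields it already equipped with a canonical enumeration of its underlying set, so it may be regarded as a structure with domain~$\omega$, landing in the intended standard Borel space.

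The one substantive thing left to check is that $\Gamma\mapsto P_{(\Gamma,n)}$ is Borel, and I would establish this by factoring the map through $P^-_{(\Gamma,n)}$. The first factor $\Gamma\mapsto P^-_{(\Gamma,n)}$ is continuous: after fixing a canonical scheme for enumerating the block $A_v$ attached to each vertex and the block $B(e)$ attached to each edge in Construction~\ref{ngon_construction}(i)--(ii), any finite portion of the atomic diagram of $P^-_{(\Gamma,n)}$ is determined by a finite portion of the atomic diagram of $\Gamma$, precisely because the configurations $A$ and $B$ of Lemma~\ref{the_crucial_lemma} are fixed finite objects. The second factor is the free completion $C\mapsto F(C)$ of Definition~\ref{def_free_completion}, which I would argue is a Borel operation on the space of enumerated partial $n$-gons: the stages $A_i$ are produced by an explicit countable recursion in which, passing from $A_i$ to $A_{i+1}$, one adjoins a clean arc with canonically numbered new vertices for every pair $(a,b)$ with $d(a,b/A_i)=n+1$, or with $d(a,b/A_i)=\infty$ and $a,b$ of the sorts prescribed in Definition~\ref{def_free_completion}; since each condition ``$d(a,b/A_i)\leq k$'' is open, ``$d(a,b/A_i)=\infty$'' is Borel, whence each stage map and their limit are Borel. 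Composing the two factors shows $\Gamma\mapsto P_{(\Gamma,n)}$ is Borel.

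That this Borel map actually reduces isomorphism is already done for us: Proposition~\ref{lemma_interpre} gives, for all countable trees $\Gamma_1,\Gamma_2$, that $\Gamma_1\cong\Gamma_2$ if and only if $P_{(\Gamma_1,n)}\cong P_{(\Gamma_2,n)}$ (in fact it produces a first-order interpretation of $\Gamma$ inside $P_{(\Gamma,n)}$, which is more than enough). Combining this with Fact~\ref{completeness:trees} then yields that the space of generalised $n$-gons with domain~$\omega$ is Borel complete.

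The step I expect to be the main obstacle is making the free completion visibly Borel; but this is bookkeeping rather than genuine difficulty, since all the hard combinatorics already lives in Section~\ref{sec_n_gons}. Concretely, one must pin down a single recursion whose $k$-th stage depends Borel-measurably on the input, coherently label the adjoined vertices by natural numbers across stages, and observe that the only non-open ingredient, ``being at distance $\infty$ in $A_i$'', is a countable intersection of open conditions and hence harmless.
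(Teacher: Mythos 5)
Your proposal is correct and follows essentially the same route as the paper: the reduction is $\Gamma\mapsto P_{(\Gamma,n)}$ from Construction~\ref{ngon_construction}, the isomorphism-preservation in both directions is delegated to Proposition~\ref{lemma_interpre}, and the conclusion comes from Fact~\ref{completeness:trees}. The only (harmless) differences are that you restrict to trees without endpoints, which is unnecessary since any tree with domain $\omega$ already guarantees $P_{(\Gamma,n)}$ is countably infinite, and that you verify Borelness of the map directly by factoring through $P^-_{(\Gamma,n)}$ and the free completion --- a more explicit justification than the paper's brief appeal to the explicitness of the construction via the interpretation.
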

	\begin{proof} Notice that Construction~\ref{ngon_construction} sends a tree $\Gamma$ of size $\aleph_0$ to a generalised $n$-gon $P_{(\Gamma, n)}$ of size $\aleph_0$. Moreover, by  Proposition~\ref{lemma_interpre}, it preserves isomorphisms and their negations. Finally, we see (exactly as in \cite[Thm.~2]{pao}) that to know a finite substructure of $ P_{(\Gamma,n)} $ it suffices to  know a finite part of $\Gamma$, and thus the map $\Gamma\to P_{(\Gamma,n)}$ is Borel. We conclude from \ref{completeness:trees} that for any $3 \leq n < \omega$ the Borel space of generalised $n$-gons with domain~$\omega$ is Borel complete.
	\end{proof}

\section{Buildings}\label{sec_building}

In this last section we use the ``free construction'' introduced by Ronan in \cite{ronan}  for buildings without rank $3$ spherical residues to extend the Borel completeness of generalised $n$-gons from the previous section to arbitrary buildings without rank~$3$ spherical residues. We refer the reader to \ref{def:basic_buildings} for the definition of buildings without rank 3 spherical residues, and we next recall some additional definitions.

\begin{notation}\label{notation_for_buildings}
	If $\Sigma(W,S)$ is a Coxeter complex, then by Definition~\ref{coxeter_complex} it is immediate to see that its chambers are of the form $\{w\}$ for some element $w\in W$ of the associated Coxeter group, as these are of the form $w \langle J \rangle_W$ for $\emptyset = J \subseteq S$. We thus often identify chambers of $\Sigma(W,S)$ with elements in $W$ and simply write $W$ for the complex $\Sigma(W,S)$. If $(W,S)$ is a Coxeter system and $w\in W$ is a chamber in $\Sigma(W,S)$, we denote by $\ell_S(w) \coloneqq \ell(w)$ the minimal length of a word in the alphabet $S$ representing $w$ in $W$ (so the word length with respect to $(W, S)$). For every $n<\omega$ we define $W_n\coloneqq\{w\in W : \ell(w)\leq n\}$. Finally,  if $\Delta$ is a chamber complex and $x\in\Delta$ is a simplex, then we denote by $\mrm{st}_\Delta(x)$ the simplicial complex consisting of all simplices $y\in \Delta$ such that $y\geq x$ (cf. \cite[\S 1.1]{tits_book}).
\end{notation}

The following definitions are from \cite[p.~243]{ronan}.

\begin{definition}
	Let $\Gamma$ be a generalised $n$-gon, viewed as a building (recall Example~\ref{example:ngons_as_buildings}), and let $x\in \Delta$ be a chamber. Let $k\in \mathbb{Z}$, then the chamber complex $\Gamma_x$ consisting of all chambers $y\in\Delta$ with distance $\leq n-k$ from $x$ (and their faces) is called a \emph{$k$-denuded $n$-gon with centre $x$}. For $k\geq 1$ the complex $\Gamma_x$ is a partial $n$-gon with no cycles (where we consider $\Gamma_x$ as a graph with edges between chambers induced by the adjacency relation, cf.~\ref{notation_chamber complexes}). For $k\leq 0$ we have that $\Gamma_x=\Gamma$.
\end{definition}

We recall the following fact about the existence of \emph{projections} in a buildings. The following fact was originally established by Tits in \cite[3.19]{tits_book} (cf. also \cite[Prop.~ 4.95]{abra}).

\begin{fact} Suppose $\Delta$ is a building, $x\in \Delta$ is a simplex and $c\in \Delta$ is a chamber. Then there exists a unique chamber, denoted by $\mrm{proj}_c(x)$, which contains $x$ and such that $d(\mrm{proj}_c(x),c/\Delta)<d(y,c/\Delta)$ for any other chamber $y\in \Delta$ containing $x$.
\end{fact}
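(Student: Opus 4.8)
The plan is to recognize the statement as the classical \emph{gate property} of residues in buildings and to prove it by reducing to a computation inside a single apartment, then transporting the conclusion back to $\Delta$ along a retraction. First I would replace $x$ by the residue it determines: the chambers of $\Delta$ containing $x$ form the chamber set of a residue $R$, namely the $J$-residue (with $J \subseteq S$ the cotype of $x$) of any chamber containing $x$; concretely $R = \mrm{st}_\Delta(x)$, viewed as a building of type $D\restriction J$ (recall Definition~\ref{def:basic_buildings}). So it suffices to prove: for every chamber $c$ and every residue $R$ there is a chamber $g = \mrm{proj}_R(c) \in R$ with $d(c,g/\Delta) < d(c,d/\Delta)$ for every $d \in R$, $d \neq g$. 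In fact I would aim for the stronger \emph{gate property} $d(c,d/\Delta) = d(c,g/\Delta) + d(g,d/\Delta)$ for all $d \in R$, since it packages existence and uniqueness together and is what one actually uses later.

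Next comes the apartment computation. By Definition~\ref{def_building}(ii) I would fix an apartment $\Sigma$ containing $c$ and $x$, so $\Sigma \cong \Sigma(W,S)$; identifying its chambers with $W$ as in Notation~\ref{notation_for_buildings}, with $c \leftrightarrow 1$, the set $R \cap \Sigma$ of chambers of $R$ lying in $\Sigma$ is exactly the $J$-residue of $x$ inside $\Sigma$, hence a coset $v W_J$ of the standard parabolic $W_J = \langle J\rangle_W$. A standard fact about Coxeter systems (via the exchange/deletion condition) is that $vW_J$ has a unique element $v_0$ of minimal length and that $\ell(v_0 u) = \ell(v_0) + \ell(u)$ for all $u \in W_J$. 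Since gallery distance inside a Coxeter complex coincides with word length, the chamber $g \in R \cap \Sigma$ corresponding to $v_0$ satisfies $d_\Sigma(c,d) = d_\Sigma(c,g) + d_\Sigma(g,d)$ for every $d \in R \cap \Sigma$; and by the (standard) convexity of apartments these apartment distances agree with the distances $d(\cdot,\cdot/\Delta)$ from Notation~\ref{notation_chamber complexes}. Thus $g$ is a gate for $R \cap \Sigma$ toward $c$, measured in $\Delta$.

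The third step transports this to all of $R$ using the canonical retraction $\rho = \rho_{\Sigma,c}\colon \Delta \to \Sigma$ onto $\Sigma$ centered at $c$, which exists by the apartment axioms of Definition~\ref{def_building}: for a chamber $d$, pick an apartment containing $c$ and $d$, map it to $\Sigma$ by the unique type-preserving isomorphism fixing the intersection, and let $\rho(d)$ be the image of $d$; the axioms make this well defined. I would use three properties of $\rho$: it is type-preserving and fixes $x$, hence maps $R$ into $R \cap \Sigma$; and it preserves minimal galleries issuing from $c$, so $d(c,\rho(d)/\Delta) = d(c,d/\Delta)$ for every chamber $d$. Then for $d \in R$,
\[ d(c,d/\Delta) = d(c,\rho(d)/\Delta) = d(c,g/\Delta) + d(g,\rho(d)/\Delta) \geq d(c,g/\Delta), \]
with equality iff $\rho(d) = g$. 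For the strict inequality when $d \neq g$ I would run the standard descent: if $d \in R$ is not at minimal distance from $c$, then inside an apartment containing $c$, $x$ and $d$ the chamber $d$ corresponds to a non-minimal coset representative, so there is $s \in J$ and an $s$-adjacent chamber $d' \in R$ with $d(c,d'/\Delta) = d(c,d/\Delta) - 1$; iterating, and recording the panels crossed, gives both uniqueness of $g$ and the gate equality by induction on $d(g,d/\Delta)$.

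The main obstacle is really the third step: making the retraction $\rho_{\Sigma,c}$ precise and proving that it preserves distances from $c$ rests on the convexity of apartments and on the stability of minimal galleries under retraction, which are themselves substantial structural theorems about buildings rather than formal consequences of Definition~\ref{def_building}. Since the statement is classical (Tits, \cite[3.19]{tits_book}; cf.\ \cite[Prop.~4.95]{abra}), the appropriate course in this paper is simply to cite it, the sketch above being how one would reconstruct it from the apartment axioms if a self-contained proof were wanted.
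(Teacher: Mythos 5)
The paper offers no proof of this statement at all: it is stated as a Fact and attributed to Tits \cite[3.19]{tits_book} (cf.\ \cite[Prop.~4.95]{abra}), so your closing recommendation to simply cite it is exactly what the paper does. Your reconstruction --- reducing to the $J$-residue $\mrm{st}_\Delta(x)$, computing the gate in an apartment via the minimal-length coset representative of $vW_J$, and transporting back with the retraction $\rho_{\Sigma,c}$ --- is the standard argument and is essentially sound; the one loose end, namely that your displayed chain gives equality only when $\rho(d)=g$ rather than when $d=g$, is indeed closed by the descent/gate-equality step you describe.
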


 The following remark is from \cite[p.~245]{ronan}.

\begin{remark}\label{remark:denuded_in_coxeter}
	Let $(W,S)$ be a Coxeter system and let $x\in \Delta \coloneqq \Sigma(W,S)$ be a simplex of corank 2, then $\mrm{st}_\Delta(x)$ is a generalised $m_{ij}$-gon, where $\{i,j\}$ is the cotype of $x$. Similarly, for $n<\omega$, we have that $\mrm{st}_\Delta(x)\cap W_n$ is a $k$-denuded $m_{ij}$-gon, where
	 $$k=\ell(\mrm{proj}_1(x))+m_{ij}-n.$$
\end{remark}

Our key goal is to define, for any Coxeter diagram $D=(I,E)$ with no rank~3 spherical residues, a Borel map from the class of countable trees to the class of countable buildings with Coxeter diagram $D$. Starting from a tree $\Gamma$, we use Ronan's construction from \cite{ronan} to define a ``free'' building $\Delta_\Gamma$ of type $D$ whose isomorphism type is determined by $\Gamma$. Recall that the \emph{parameters} of a generalised $n$-gon $A$, viewed as a building, is the pair $(q_i,q_j)$ which specify the numbers of chambers to which every panel of cotype $\{i\}$ and cotype $\{j\}$ belong, respectively (cf.~\cite[p.~242]{ronan}). A key feature of Ronan's construction is that it allows us to construct $\Delta_\Gamma$ so that its rank 2 residues of type $\{i,j\}$ are isomorphic to some pre-determined $m_{ij}$-gon, as long as they have the same parameters $q_i$ and $q_j$. We apply this construction using the generalised $n$-gons $P_{(\Gamma, n)}$ from \ref{ngon_construction}. In the following we simply describe how Ronan's construction works applied to our specific context, and we refer the reader to \cite{ronan} for a detailed proof of the fact that this construction actually yields a building.

\begin{construction}[Ronan \cite{ronan}]\label{construction:ronan}
	Let $D=(I,E)$ be a Coxeter diagram with no rank 3  spherical residues and let $\mathcal{F}=\{A_{ij} :  i\neq j\in I\}$ be a family of generalised $n$-gons satisfying the following conditions:
	\begin{enumerate}[(a)]
		\item if $m_{ij} = 2$, then $A_{ij}$ is the complete bipartite graph of size $\aleph_0$;
		\item if $m_{ij} = \infty$, then $A_{ij}$ is the free pseudoplane (i.e., the unique countable tree where every vertex has infinite valency);
		\item if $m_{ij} \geq 3$, then $A_{ij}$ is a generalised $m_{ij}$-gon where every element has infinite valency.
	\end{enumerate}
	We define the associated ``free'' building $\Delta_\mathcal{F}$ of type $D$ using Ronan's construction from \cite{ronan}, relatively to the family $\mathcal{F}$.  Notice that since in every generalised $n$-gon from $\mathcal{F}$ every element has infinite valency, it follows that every panel in each $A_{ij}$ is a face of infinitely many chambers. This means that these generalised $n$-gons have the same \emph{parameters}, ensuring that they satisfy the compatibility conditions required for Ronan's construction (cf. \cite[p.~243]{ronan}). We describe the construction of the free building $\Delta_\mathcal{F}$ and refer to \cite{ronan} for further details and the proof that $\Delta_\mathcal{F}$ is indeed a building. It is quite important to keep in mind that, in the following construction, we always view (partial) generalised $n$-gons as simplicial complexes, following Example~\ref{example:ngons_as_buildings}.

	\medskip \noindent  Let $W$ be the Coxeter complex with diagram $ D=(I,E) $ (recalling Definition~\ref{coxeter_complex} and Notation~\ref{notation_for_buildings}). We define the building $\Delta_\mathcal{F}$ with Coxeter diagram $D$ as the union of an increasing chain $(\Delta_n)_{n<\omega}$ of simplicial complexes. Notice also that we view each simplicial complex $\Delta_n$ as a chamber complex with an underlying type function $\mrm{tp}:\Delta_n\to I$. It follows that the key notions from \ref{def:basic_buildings} apply also in this context, even if $\Delta_n$ is not (yet) a building. Recall also the notion of adjacency from \ref{notation_chamber complexes} and the fact that we identify elements of $W$ with chambers in $\Sigma(W, S)$. For each $n<\omega$, we assume inductively that there is an adjacency-preserving map $\rho_n:\Delta_n\to W$ (i.e., a map which preserves $i$-adjacency in both directions for all $i\in I$, cf. \cite[p.~51]{tits_book}) such that $\rho_n\subseteq \rho_m$ whenever $n<m$, and satisfying the following condition:
	\smallskip
	
	\begin{enumerate}[$(C_n)$]
		\item 
		\begin{enumerate}
		\item the image of $\rho_n$ is $W_n$;
		\item if $y$ is a simplex of $\Delta_n$ of cotype $\{i,j\}$, $\rho_n \restriction \mrm{st}_{\Delta_n}(y)$ is the canonical retraction with centre $z$, where $z$ is the unique element of $\mrm{st}_{\Delta_n}(y)$ such that $\rho_n(z) = \mrm{proj}_1(\rho_n(y))$ (cf. \cite[2.4]{tits_book} and \cite[p. 244]{ronan}).
		\item furthermore, for $y$ and $z$ as in clause (b), we have that $\mrm{st}_{\Delta_n}(y)$ is a $k$-denuded $m_{ij}$-gon, for $k=\ell(\mrm{proj}_1(\rho_n(y)))+m_{ij}-n$.
	\end{enumerate}
	\end{enumerate}
	
	\smallskip \noindent  We proceed as follows. Let $\Delta_0=\{c \}$ consist of a single chamber $c$ and suppose inductively that $\Delta_m$ is defined for all $m\leq n$, we define $\Delta_{n+1}$ and $\rho_{n+1}:\Delta_{n+1}\to W$ by considering all chambers $w\in W_{n+1}\setminus W_{n}$. Notice that since $w\in W_{n+1}\setminus W_{n}$ is a word on the alphabet $I$ of length $n+1$, $W_n$ contains a word $v$ such that $w=vi$ for some $i\in I$. Then the standard coset $w\langle i\rangle_{W}$ contains both $w$ and $v=wi^{-1}=wi$, thus $w$ and $v$ are $i$-adjacent (recall \ref{coxeter_complex}). It follows that every $w\in W_{n+1}\setminus W_{n}$ is adjacent to at least one element from $W_n$. Additionally, since $D$ does not contain any spherical residue of rank 3, it follows that $w$ is adjacent to \emph{at most two} chambers in $W_n$ (see \cite[Lemma 2]{ronan}). It thus suffices to consider the following cases.
	
	\medskip \noindent \underline{Case 1}. $w\in W_{n+1}\setminus W_{n}$ is adjacent to exactly one chamber $v\in W_n$. 
	\newline Let $i\in I$ be the unique index for which $w$ is $i$-adjacent to $v$.  For every chamber $u\in \Delta_n$ such that $u\in \rho_n^{-1}(v)$ we add in $\Delta_{n+1}$ a new set of chambers $\{x_t : 1\leq t <\omega \}$ such that they are all $i$-adjacent to $u$ and to each other. Additionally, we let them be $j$-adjacent only to themselves for every $i\neq j\in I$. We extend $\rho_n$ to $\rho_{n+1}$ by letting  $\rho_{n+1}(x_t)=w$ for all $1\leq t <\omega$. It is then clear that $\rho_{n+1}$ is still an adjacency-preserving map between $\Delta_{n+1}$ and $W_{n+1}$.
	
	\medskip \noindent \underline{Case 2}. $w\in W_{n+1}\setminus W_{n}$ is adjacent to exactly two chambers $v_1,v_2\in W_n$. 
	\newline Let $i,j\in I$ be the two indices in $I$ such that $w$ is $i$-adjacent to $v_1$ and $w$ is $j$-adjacent to $v_2$ for some $i,j\in I$. Notice also that we must have that $i\neq j$, for otherwise it follows that $v_1=v_2$. Let $x\in \Sigma(W,S)$ be the (unique) simplex of cotype $\{i,j\}$ which is a face of $w$ (namely, $x=w\land v_1\land v_2$) and consider any $y\in \rho_{n}^{-1}(x)$. Notice that, for any such $y$, $\mrm{st}_{\Delta_n}(y)$ is a $\{i,j\}$-residue of $\Delta_{n+1}$. Moreover, it follows from Condition~$(C_n)$ that the simplex $\mrm{st}_{\Delta_n}(y)$ is a $1$-denuded $m_{ij}$-gon with centre $z$ (for $z$ as in clause (b) of condition $(C_n)$), i.e., it is the simplex containing all chambers with distance $\leq m_{ij}-1$ from $z$ (cf.~\cite[p.~247]{ronan}). It is thus possible to complete each $\mrm{st}_{\Delta_n}(y)$  into a $m_{ij}$-gon isomorphic to $A_{ij}$: we adjoin chambers $\{z_t : t<\omega\}$ such that $y\leq  z_t$ for all $t<\omega$ and so that we obtain a simplicial complex $C(y)\coloneqq \mrm{st}_{\Delta_n}(y)\cup \{z_t : t<\omega\}$ of type $\{i,j\}$ which forms an $m_{ij}$-gon isomorphic to $A_{ij}$. For each $k\notin \{i,j\}$ we simply let each chamber $z_t$ be $k$-incident only to itself. Finally, we extend $\rho_n$ to $\rho_{n+1}$ by letting $\rho_{n+1}(z_t)=w$ for all $t<\omega$.
	
	\medskip \noindent We let $\Delta_{n+1}$ be the simplicial complex obtained by performing the construction from Case 1 or Case 2 to every $w\in W_{n+1}\setminus W_{n}$, and we let $\rho_{n+1}$ also be defined by the two former steps. It was proved by Ronan in \cite{ronan} that $\rho_{n+1}$ and $\Delta_{n+1}$ together satisfy property $(C_{n+1})$. Let then $\Delta_\mathcal{F}=\bigcup_{n<\omega}\Delta_n$ and $\rho=\bigcup_{n<\omega}\rho_n$, it follows from \cite[Lemma 1]{ronan} that $\rho$ is an adjacency-preserving map from $\Delta_\mathcal{F}$ onto $W$, and thus that $\Delta_\mathcal{F}$ is a building with Coxeter diagram $D=(I,E)$. Additionally, it follows immediately by construction that for every distinct $i,j\in I$, any $\{i, j\}$-residue of $B$ is isomorphic to $P(i, j)$ (cf. \cite[p.~243]{ronan}). The isomorphism type of the building $\Delta_\mathcal{F}$ thus depends exclusively on the isomorphism types of the generalised $m_{ij}$-gons $P(i, j)$ for $i,j \in I$ with $i\neq j$, which was \mbox{specified in conditions (a)-(c) of the statement.}
\end{construction}

We conclude the article by using together Construction~\ref{ngon_construction} and Construction~\ref{construction:ronan} to prove our main theorem.

\begin{theorem}\label{main_th}
	For every Coxeter diagram $D=(I,E)$ with no rank $3$ residues of spherical type and such that $D$ has at least one edge labelled by $n\in [3,\infty]$, the space of buildings of type $D$ with domain $\omega$ is Borel complete.
\end{theorem}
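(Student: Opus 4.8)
The plan is to combine the coding of trees into generalised $n$-gons (Construction~\ref{ngon_construction}) with Ronan's free construction (Construction~\ref{construction:ronan}) to build a Borel reduction from the space of countable trees, which is Borel complete by Fact~\ref{completeness:trees}, to the space of countable buildings of type $D$. Fix an edge of $D$ with label $n_0 \in [3,\infty]$ (possible by hypothesis). I would first treat the case $n_0 \in [3,\omega)$, taking the reduction to be $\Gamma \mapsto \Delta_\Gamma$, where $\Delta_\Gamma$ is the building produced by Construction~\ref{construction:ronan}: every rank~$2$ residue of $\Delta_\Gamma$ of type $\{i,j\}$ with $m_{ij}\geq 3$ is then a copy of $P_{(\Gamma, m_{ij})}$, while the residues with $m_{ij}\in\{2,\infty\}$ are the $\Gamma$-independent structures of clauses (a)--(b). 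The case in which every edge of $D$ carrying a label $\geq 3$ is labelled $\infty$ would be handled by the same scheme, replacing each $P_{(\Gamma, m_{ij})}$ by a Borel coding of countable trees into \emph{thick} trees without endpoints in which $\Gamma$ is interpretable; producing such a coding is a routine variant of Fact~\ref{completeness:trees} and of the arguments of Section~\ref{sec_n_gons}, so I will describe only the first case.

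There are then three things to check. First, that the map is \emph{Borel}: Construction~\ref{construction:ronan} builds $\Delta_\Gamma$ as an increasing union $\bigcup_n\Delta_n$ by a single explicitly described operation performed at stage $n+1$ for each chamber $w\in W_{n+1}\setminus W_n$ of the \emph{fixed} Coxeter complex $W$ (attaching countably many chambers in Case~1, completing a $1$-denuded polygon to a copy of $P(i,j)$ in Case~2); since the combinatorics of $W$ is fixed, the blocks $P(i,j)$ are either outright fixed or obtained from $\Gamma$ by the Borel procedure of Construction~\ref{ngon_construction}, and the gluing can be organised by a fixed recursive bookkeeping realising the output on domain $\omega$, the assignment $\Gamma\mapsto\Delta_\Gamma$ is Borel. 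Second, that it \emph{preserves} $\cong$: if $\Gamma_1\cong\Gamma_2$ then $P_{(\Gamma_1,m)}\cong P_{(\Gamma_2,m)}$ for every finite $m\geq 3$ by Proposition~\ref{lemma_interpre}, so the families of prescribed rank~$2$ residues agree residue-by-residue up to isomorphism, and by the closing remark of Construction~\ref{construction:ronan} the isomorphism type of $\Delta_\Gamma$ is determined by these; hence $\Delta_{\Gamma_1}\cong\Delta_{\Gamma_2}$.

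Third, that it \emph{reflects} $\cong$, which is where the real work lies. Given an isomorphism $f\colon\Delta_{\Gamma_1}\to\Delta_{\Gamma_2}$, I would use the standard fact that a building isomorphism is a simplicial isomorphism carrying residues to residues and inducing a permutation of the type set $I$ that preserves the labels $m_{ij}$ (a diagram automorphism), since a rank~$2$ residue of type $\{i,j\}$ is an $m_{ij}$-gon and that is an isomorphism invariant. Thus, choosing any $\{i_0,j_0\}$-residue $R$ of $\Delta_{\Gamma_1}$, which by construction satisfies $R\cong P_{(\Gamma_1,n_0)}$, its image $f(R)$ is a residue of $\Delta_{\Gamma_2}$ whose type still has label $n_0$, so $f(R)\cong P_{(\Gamma_2,n_0)}$. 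Hence $P_{(\Gamma_1,n_0)}\cong P_{(\Gamma_2,n_0)}$, and Proposition~\ref{lemma_interpre} gives $\Gamma_1\cong\Gamma_2$. Together the three points show that $\Gamma\mapsto\Delta_\Gamma$ is a Borel reduction from isomorphism of countable trees to isomorphism of countable buildings of type $D$, which is therefore Borel complete.

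The two places I expect to need genuine care are: (i) verifying that Construction~\ref{construction:ronan} is truly Borel and uniform in $\Gamma$ --- as presented it involves choices (which residues to complete, in what order, and how to enumerate the newly added chambers) that must be resolved by a measurable rule so that the map lands in the standard Borel space of structures on $\omega$; and (ii) the case $n_0=\infty$, which requires the thick analogue of Fact~\ref{completeness:trees} mentioned above, together with the compatibility check that the prescribed $\infty$-residues meet the parameter conditions of Ronan's construction (which they do once one arranges, as in Construction~\ref{ngon_construction}, that every vertex has infinite valency). All the remaining content --- preservation and reflection of isomorphism --- reduces to Proposition~\ref{lemma_interpre} and the functoriality of Ronan's construction already recorded in Construction~\ref{construction:ronan}.
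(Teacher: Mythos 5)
Your proposal follows essentially the same route as the paper: both seed Ronan's construction with the polygons $P_{(\Gamma, m_{ij})}$, recover $\Gamma$ from a single rank-$2$ residue via Proposition~\ref{lemma_interpre}, and split into the same two cases according to whether $D$ carries a finite label $\geq 3$. The only cosmetic differences are that the paper codes buildings as multi-sorted structures (so isomorphisms preserve cotypes by fiat, making your appeal to diagram automorphisms unnecessary) and establishes Borelness through the pointed intermediate $\Delta_\Gamma(a)$ and the induced interpretation, and in the all-$\infty$ case it reduces directly from trees without endpoints, using $\Gamma$ itself as the $\infty$-residue, rather than passing through a separate thickening step.
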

\begin{proof} 
	Suppose first that  $D$ has at least one edge labelled by $3\leq n <\omega$.	Given a tree $\Gamma$ of size $\aleph_0$, we consider  for every $i\neq j\in I$ with $m_{ij}$ the associated generalised $n$-gons $P_{(\Gamma,m_{ij})}$ given by Construction~\ref{ngon_construction}. Then, let $\mathcal{F}_\Gamma$ be the family made of all the generalized $n$-gons $P_{(\Gamma,m_{ij})}$, for $m_{ij}\geq 3$, and let $P_{(\Gamma,m_{ij})}$ be the complete bipartite graph of size $\aleph_0$ or free pseudoplane for $m_{ij}=2$ or $m_{ij}=\infty$, respectively. Thus $\mathcal{F}_\Gamma$ satisfies the conditions from Construction~\ref{construction:ronan}, and we can therefore perform Ronan's construction with $\mathcal{F}_\Gamma$ as input. We let $\Delta_\Gamma$ be the building obtained in this way.  We view $\Delta_\Gamma$ as a first-order structures in the multi-sorted language with the nonempty subsets $J\subseteq I$ as sorts and the order relation $\leq$. The elements of $\Delta_\Gamma$ are its simplices, $x\in \Delta_\Gamma$ belongs to the sort $J$ if its type as a simplex is $J$, and for $x,y\in \Delta_\Gamma$ we have $x\leq y$ if $x$ is a face of $y$. Notice that since $|\Gamma|=\aleph_0$ we have that also $\Delta_\Gamma$ has size $\aleph_0$.
	
	\smallskip
	\noindent We prove that the map $\Gamma \mapsto \Delta_\Gamma$ is a Borel reduction from the space of countable trees to the space of buildings of type $D$. Firstly, we show  that $\Gamma \mapsto \Delta_\Gamma$ preserves isomorphisms and their negation. The first claim is clear from Construction~\ref{construction:ronan}. Concerning the latter, suppose that $\Delta_{\Gamma_1} \cong \Delta_{\Gamma_2}$ and let $b$ be a simplex of cotype $\{i, j\}$ in $\Delta_{\Gamma_1}$. Now, $\mrm{st}_{\Delta_{\Gamma_1}}(b) $ is a $\{i, j\}$-residue in $\Delta_{\Gamma_1}$ and thus it is isomorphic to $P_{(\Gamma_1, m_{ij})}$. Similarly, the complex $\mrm{st}_{\Delta_{\Gamma_2}}(f(b)) $ is a $\{i, j\}$-residue in $\Delta_{\Gamma_2}$ and thus it is isomorphic to $P_{(\Gamma_2, m_{ij})}$. Therefore, since $f$ is an isomorphism, we have that: $$P_{(\Gamma_1, m_{ij})} \cong P_{(\Gamma_2, m_{ij})},$$
	but by Proposition~\ref{lemma_interpre} this happens if and only if $\Gamma_1 \cong \Gamma_2$, proving our claim. 
	
	\smallskip \noindent Thus it remains to show that the map $\Gamma \mapsto \Delta_\Gamma$ is Borel. To this end, we can view this map as the composition of two functions: the first one assigning $\Gamma$ to the structure $M_{\Gamma}$  consisting of the disjoint union of all the $P_{(\Gamma, m_{ij})}$, together with labels indicating for each $P_{(\Gamma, m_{ij})}$ the associated indices $i$ and $j$; the second map assigning $M_{\Gamma}$ to $\Delta_\Gamma$ as in Construction~\ref{construction:ronan}. Clearly, the first map is Borel, so it suffices to show that $M_{\Gamma}\to \Delta_{\Gamma}$ is Borel as well. To this end, notice that to know whether two chambers $x,y\in \Delta_{\Gamma}$ are $i$-adjacent, it suffices to know at which step of Construction~\ref{construction:ronan} they were introduced and, in case they were introduced during the Case 2 step from Construction~\ref{construction:ronan}, also to which element of the corresponding generalised $n$-gon $P_{(\Gamma, m_{ij})}$ they correspond to. Thus, if follows that also the map $M_{\Gamma}\mapsto \Delta_\Gamma$ is Borel, showing that the function mapping $\Gamma$ to $\Delta_{\Gamma}$ is Borel as well. 
	
	\smallskip
	\noindent It remains to consider the case when $D$ has no edges labelled by $3\leq n <\omega$. In this case we define a Borel reduction from the class of countable trees with no endpoints, which is also Borel complete by Fact~\ref{completeness:trees}. By the previous assumptions it follows that $D$ has an edge $\{i, j\}$ labelled by $\infty$.  We then consider the building $\Delta^\infty_\Gamma$ obtained as in Construction~\ref{construction:ronan} but by choosing for $m_{ij}=2$ the unique countable bipartite graph, and for $m_{ij}=\infty$ the very same tree $\Gamma$, which by Remark~\ref{remark:spherical_buildings} is a building of rank 2. Notice in particular that in $\Gamma$ every maximal flag is adjacent to infinitely-many others, thus $\Gamma$ has parameters compatible with the complete bipartite graph of size $\aleph_0$, and thus they can be used together in Ronan's construction (cf.~\ref{construction:ronan}). Reasoning exactly as before, it follows that the map $\Gamma\mapsto \Delta^\infty_\Gamma$ is Borel, and that for two buildings $\Delta^\infty_{\Gamma_1} $ and $\Delta^\infty_{\Gamma_2} $ we have that $\Delta^\infty_{\Gamma_1} \cong \Delta^\infty_{\Gamma_2}$ if and only if $\Gamma_1\cong \Gamma_2$. This completes our proof.
\end{proof}


\begin{thebibliography}{10}

\bibitem{abra}
P. Abramenko and K.S. Brown.
\newblock {\em Buildings: theory and applications}.
\newblock  Springer, New York, 2008.

\bibitem{tent}
A.M. Ammer and K. Tent.
\newblock {\em On the model theory of open generalized polygons}.
\newblock Model Th. {\bf 03} (2024), no. 02, 433-448.

\bibitem{brouwer}
A.E. Brouwer, A.M. Cohen, A. Neumaier.
\newblock {\em Distance-regular graphs}.
\newblock Springer, Berlin, 1989.

\bibitem{busch}
S. Busch.
\newblock {\em The Moufang condition and root automorphisms for spherical buildings of rank~3}.
\newblock ArXiv preprint: arXiv:2502.12857, 2025.

\bibitem{coxeter}
H.S.M. Coxeter.
\newblock {\em The complete enumeration of finite groups of the form $R_i^2=(R_iR_j)^{k_{ij}}=1$}.
\newblock J. London Math. Soc. {\bf 10} (1935), no. 1, 21-25.


\bibitem{friedman}
H. Friedman and L. Stanley.
\newblock {\em A Borel reducibility theory for classes of countable structures}.
\newblock J. Symb. Log. {\bf 54} (1989), no. 3, 894-914.

\bibitem{funk2}
M. Funk and K. Strambach.
\newblock {\em Free constructions}.
\newblock Handbook of Incidence Geometry, 739-780, North-Holland, Amsterdam, 1995.
%
%
\bibitem{gao}
S. Gao.
\newblock {\em Invariant descriptive set theory}. 
\newblock Taylor \& Francis Inc, 2008.
%
\bibitem{hall_proj}
M. Hall.
\newblock {\em Projective planes}.
\newblock Trans. Amer. Math. Soc. {\bf 54} (1943), 229-277.

\bibitem{projective_planes}
D.R. Hughes and F.C. Piper. 
\newblock {\em Projective planes}.
\newblock Graduate Texts in Mathematics, Vol.6. Springer-Verlag, New York-Berlin, 1973.

\bibitem{pao}
G. Paolini.
\newblock {\em The class of non-Desarguesian projective planes is Borel complete}.
\newblock Proc. Amer. Math. Soc. {\bf 146} (2018), 4927-4936.

\bibitem{PQ}
G. Paolini and D.E. Quadrellaro.
\newblock {\em On the model theory of open incidence structures: the rank 2 case}. 
\newblock ArXiv preprint: arXiv:2411.10792.


\bibitem{PS}
G. Paolini and S. Shelah.
\newblock {\em Torsion-free abelian groups are Borel complete}.
\newblock Ann. of Math. (2) {\bf 199} (2024), no. 03, 1177-1224.

\bibitem{ronan}
M.A. Ronan.
\newblock {\em A construction of buildings with no rank 3 residues of spherical type}.
\newblock Buildings and the Geometry of Diagrams, 242-248, Springer, Berlin, 1986. 

\bibitem{tits_local}
J. Tits.
\newblock {\em A local approach to buildings}.
\newblock The geometric vein: the Coxeter Festschrift, 519-547, Springer, New York, 1981.

\bibitem{tits_book}
J. Tits.
\newblock {\em Buildings of spherical type and finite BN-pairs}.
\newblock Springer, Berlin, 1974.

\bibitem{tits}
J. Tits.
\newblock {\em Endliche Spiegelungsgruppen, die als Weylgruppen auftreten}.
\newblock Invent. Math. {\bf 43} (1977), no. 3, 283-295.
%
\bibitem{maldeghem}
H. Van Maldeghem.
\newblock {\em Generalized polygons}.
\newblock Birkhäuser, Basel, 1998.

\bibitem{weiss}
R.M. Weiss.
\newblock {\em The structure of spherical buildings}.
\newblock Princeton University Press, Princeton,  2004.


\end{thebibliography}
\end{document}